\theoremstyle{plain}
\newtheorem{theorem}{Theorem}[section]
\newtheorem{lemma}[theorem]{Lemma}
\newtheorem{proposition}[theorem]{Proposition}
\newtheorem{corollary}[theorem]{Corollary}
\theoremstyle{definition}
\theoremstyle{remark}
\newtheorem{remark}[theorem]{Remark}
\renewcommand{\int}{\mathrm{int}}
\renewcommand{\epsilon}{\varepsilon}
\renewcommand{\vert}{\mathrm{vert }}
\renewcommand{\phi}{\varphi}
\let\@fnsymbol\@arabic
\author{Igor A. Baburin}\thanks{Theoretische Chemie, Technische Universit{\"a}t Dresden, 01062 Dresden, Germany, Igor.Baburin@chemie.tu-dresden.de}
\author{Mikhail Bouniaev}\thanks{School of Mathematical \& Statistical Sciences, University of Texas Rio Grande Valley, Brownsville, TX 78520, USA, mikhail.bouniaev@utrgv.edu}
\author{Nikolay Dolbilin}\thanks{Steklov Mathematical Institute, Moscow 117966, Russia, dolbilin@mi.ras.ru}
\author{Nikolay Yu. Erokhovets}\thanks{Steklov Mathematical Institute, Moscow 117966, Russia, erochovetsn@hotmail.com}
\author{Alexey Garber}\thanks{School of Mathematical \& Statistical Sciences, University of Texas Rio Grande Valley, Brownsville, TX 78520, USA, alexeygarber@gmail.com}
\author{Sergey V. Krivovichev}\thanks{Kola Science Centre of the Russian Academy of Sciences, Apatity 184209, Russia and Department of Crystallography, Saint Petersburg State University, Saint Petersburg 199155, Russia, s.krivovichev@spbu.ru}
\author{Egon Schulte}\thanks{Department of Mathematics, Northeastern University, Boston, MA 02115, USA, schulte@neu.edu}
\title{On the Origin of Crystallinity: a Lower Bound for the Regularity Radius of Delone Sets}
\begin{document}

\begin{abstract}
The mathematical conditions for the origin of long-range order or crystallinity in ideal crystals is one of the very fundamental problems of modern crystallography. It is widely believed that the (global) regularity of crystals is a consequence of 'local order', in particular the repetitivity of local fragments, but the exact mathematical theory of this phenomena is poorly known. In particular, most mathematical models for quasicrystals, for example Penrose tiling, have repetitive local fragments, but are not (globally) regular. The universal abstract models of any atomic arrangements are Delone sets, which are uniformly distributed discrete point sets in Euclidean $d$-space. An ideal crystal is a regular or multi-regular system, that is, a Delone set, which is the orbit of a single point or finitely many points under a crystallographic group of isometries. The local theory of regular or multi-regular systems aims at finding sufficient local conditions for a Delone set $X$ to be a regular or multi-regular system. One of the main goals is to estimate the {\it regularity radius\/} $\hat{\rho}_d$ for Delone sets $X$ in terms of the radius $R$ of the largest ``empty ball'' for $X$. The celebrated ``local criterion for regular systems'' provides an upper bound for $\hat{\rho_d}$ for any $d$. Better upper bounds are known for $d\leq 3$. The present paper establishes the lower bound $\hat{\rho_d}\geq 2dR$ for all $d$, which is linear in $d$. The best previously known lower bound had been $\hat{\rho}_d\geq 4R$ for $d\geq 2$. The proof of the new lower bound is accomplished through explicit constructions of Delone sets with mutually equivalent $(2dR-\varepsilon)$-clusters, which are not regular systems. The $2$- and $3$-dimensional constructions are illustrated by examples. In addition to its fundamental importance, the obtained result is also relevant for the understanding of geometrical conditions of formation of ordered and disordered arrangements in polytypic materials.
\end{abstract}

\maketitle                 
     
\section{Introduction}
\label{intro}

The standard mathematical model of an ideal crystal is based on two fundamental concepts:\ 
a uniformly distributed discrete point set (or Delone set) and a crystallographic group. An ideal crystal structure is modeled by a Delone set that can be split into crystallographic orbits, i.e. the Delone subsets invariant under some crystallographic group.

The origin of crystallinity, i.e. the appearance of a crystallographic group of symmetries in the atomic structure created in a crystallization process has always been one of the very basic problems of crystallography. Physicists usually explain it by the fact that, in crystalline matter, local arrangements of atoms of the same kind tend to be identical. Thus the global regular structure is the result of the action of local forces, as captured by the following excerpt from Chapter 30 of~\cite{fey}:\ ``When the atoms of matter are not moving around very much, they get stuck together and arrange themselves in a configuration with as low an energy as possible. If the atoms in a certain place have found a pattern which seems to be of low energy, then the atoms somewhere else will probably make the same arrangement. For these reasons, we have in a solid material a repetitive pattern of atoms.'' Therefore, the space-group symmetry and periodicity of crystal structures can be seen as a result of the requirement of equality of local atomic arrangements throughout the whole crystal.

The mathematical theory that addresses the fundamental problem of relations between the local and global structure of a crystal (namely, how local congruence of atomic arrangements dictates the global structural regularity) was initiated in \cite{dedostga}. The basic idea was to analyze Delone sets in $d$-dimensional Euclidean space $\mathbb{R}^d$ (for precise definitions see below). An ideal crystal structure can be modeled as a regular or multi-regular point system, which is the orbit of a single point or finitely many points under a crystallographic group of isometries in $\mathbb{R}^d$. The local theory proposed first in \cite{dedostga} aims at finding sufficient local conditions for a Delone set $X$ to be a regular or multi-regular system. Thus the local theory seeks to answer the following question:\ which local conditions on Delone sets $X$ of a given type guarantee the emergence of a crystallographic group of symmetries producing $X$ as an orbit set?

A major problem in the local theory concerns the {\em regularity radius\/} $\hat{\rho}_d$ of Delone sets. This is the smallest positive number $\rho$ (depending on $r$ and $R$) with the property that each Delone set $X$ of type $(r,R)$ in $\mathbb{R}^d$ with mutually equivalent $\rho$-clusters is a regular point system. (Here equivalence means congruence under a center preserving isometry.) Thus $\hat{\rho}_d$ is defined by two properties:\ first, each Delone set $X$ of type $(r,R)$ with mutually equivalent point neighborhoods of radius $\hat{\rho}_d$ is a regular system; and second, for any radius $\rho$ smaller than $\hat{\rho}_d$ there exists a  Delone set of type $(r,R)$ with mutually equivalent point neighborhoods of radius $\rho$ which is not a regular system. A priori it is not at all obvious from the definition that this number $\hat{\rho}_d$ exists (but it does!), and how it would depend on $d$, $r$ and $R$. The celebrated ``local criterion for regular systems'' established in~\cite{dedostga} provides the existence of an upper bound for $\hat{\rho_d}$ which depends on $d$, $r$ and $R$ (see also \cite{do15,do16}).

A main goal is to find good upper and lower estimates for the regularity radius $\hat{\rho}_d$ in terms of the radius $R$ of the largest ``empty ball'' that can be inserted into the point system and having no its points inside. (Incidentally, for Delone sets in hyperbolic or spherical spaces there is no upper bound for the regularity radius which is independent of~$r$, so this problem is meaningful only for Euclidean spaces.) For dimensions $1$, $2$, and~$3$, we have the upper bounds $\hat{\rho}_1\leq 2R$, $\hat{\rho}_2\leq 4R$, and $\hat{\rho}_3\leq 10R$, each depending only on $R$ (and~$d$). The proof of the estimate for $d=1$ is straightforward and is presented at the end of Section~\ref{bano}. The estimates for dimensions $2$ and $3$ were obtained by M.~Stogrin (in unpublished work) and, independently, by N.~Dolbilin (this proof just appeared in~\cite{do16}) a long time ago.  The proofs of the bounds for $d=2$ and especially for $d=3$ are quite involved and are based on the local criterion for regular systems of~\cite{dedostga} as well as on the lemma about Delone sets  in 3-space with mutually equivalent $2R$-clusters (this lemma was discovered by Stogrin a long time ago, but was just recently published in \cite{st10}).

For dimensions 1 and $2$, the exact values of the regularity radius are known and are given by $\hat{\rho}_1= 2R$ and $\hat{\rho}_2= 4R$. See the end of Section~\ref{bano} for a proof for $d=1$. For $d=2$, the lower bound $\hat{\rho}_2\geq 4R$ (complementing the aforementioned upper bound $\hat{\rho}_{2}\leq 4R$) is established by an explicit construction of Delone sets of type $(r,R)$ in the plane with mutually equivalent neighborhoods of radius smaller than $4R$, which are not regular systems. More precisely, for every $\varepsilon>0$ there is a Delone set $X$ in $\mathbb{R}^2$ which has mutually equivalent $(4R-\varepsilon)$-clusters but is not a regular system (see~\cite{do15,do17}). A similar construction in higher dimensions shows that $\hat{\rho}_d\geq 4R$ for any $d\geq 2$. However, until quite recently, it was an open question whether or not $\hat{\rho}_d$ was unbounded as a function of $d$. 

If we impose an additional restriction on the symmetries of clusters, or more precisely on existence of only trivial symmetry, then there exists a dimension-independent regularity radius for ``asymmetric'' Delone sets. Namely, if $2R$-clusters are all pairwise congruent and have trivial symmetry, and furthermore, all $4R$-clusters are congruent, then $4R$ is the dimension-independent regularity radius as proved in \cite{dedostga}. On the other hand, if we require all $2R$-clusters to be centrally symmetric, then $2R$ is the dimension-independent regularity radius as it is shown in \cite{do15,do17}, see also \cite{dm16}.

The main result of this paper is Theorem~\ref{thm:2dR}. In this theorem we prove the lower bound 
\[\hat{\rho}_d\geq 2dR\;\;\, (\mbox{for } d\geq 1).\] 
It is remarkable that this new bound grows linearly in $d$. 
When $d=3$ the new lower bound and the upper bound mentioned earlier provide the estimate $6R\leq\hat{\rho}_3\leq 10R$.

The proof of the new lower bound (for $d>2$) is accomplished through an explicit construction of  Delone sets in $\mathbb{R}^d$ with mutually equivalent $(2dR-\varepsilon)$-clusters, which are not regular systems. This construction is inspired by geometric ideas involved in two previously known constructions:\ first, the previously mentioned 2-dimensional construction of Delone sets used to prove that $\hat{\rho}_2\geq 4R$; and second, the 3-dimensional construction of a non-isohedral polyhedral tiling in $\mathbb{R}^3$, due to P.~Engel (\cite{eng2,eng1}), in which the first coronas of any two tiles are equivalent. To honor the influence of Engel's work, we are proposing the term ``Engel set'' for the specific type of Delone set constructed in this paper.

The present paper grew out of the discussions at the sessions of a ``Working Group on Delone Sets'' (consisting of the seven authors) which assembled during the ``Workshop on Soft Packings, Nested Clusters, and Condensed Matter'', held at the American Institute of Mathematics (AIM) in San Jose, California, USA, September 19-23, 2016. These discussions have been inspired by the proof of the two-dimensional lower bound presented in Nikolay~Dolbilin's lecture at the Workshop. This work is the result of collaboration between professional crystallographers (IAB and SVK) and professional mathematicians (MB, ND, NYuE, AG, and ES), which explains its style attempting to combine mathematical rigor with crystallographic intuition. In some places and, in particular, at the end of the paper, we shall interrupt precise mathematical language by illustrative examples from structural crystallography.

We greatly appreciated the opportunity to meet at AIM and are grateful to AIM for its hospitality.

		
\section{Basic Notions}
\label{bano}

Let $r$ and $R$ be positive real numbers with $r<R$. A set $X\subset \mathbb R^d$ is called a \emph{Delone set} of type $(r,R)$, or an \emph{$(r,R)$-system}, if for any ${y}\in\mathbb R^d$ the \emph{open} ball of radius $r$ centered at $y$,
\[ B^o_r({y}):=\{{x}\in\mathbb R^d\colon |{x}-{y}|< r\},\] 
contains at most one point of $X$ and the \emph{closed} ball of radius $R$ centered at $y$,
\[ B_R({y}):=\{{x}\in\mathbb R^d\colon |{x}-{y}|\leqslant R\},\] 
contains at least one point of $X$. Clearly, if $0<r'\leq r<R\leq R'$ then every Delone set of type $(r,R)$ is also a Delone set of type $(r',R')$. In designating a type $(r,R)$ to a Delone set $X$ we usually choose $r$ as large as possible, and $R$ as small as possible. For example, the integer grid $2\mathbb{Z}^2$ (with grid size 2) in the plane is a Delone set of type $(1,\sqrt{2})$. In general, Delone sets can be viewed as universal abstract models of atomic arrangements, where coordinates of points are coordinates of atomic nuclei or gravity centers.

The {\em symmetry group\/} $S(X)$ of a Delone set $X$ in $\mathbb{R}^d$ consists of all isometries of $\mathbb{R}^d$ which map $X$ to itself. This may include both proper and improper isometries.

A \emph{regular system} is a Delone set $X\subset \mathbb R^d$ whose symmetry group $S(X)$ acts transitively on the points of $X$, that is, for any ${x}, {y}\in X$ there exists an isometry $g\in S(X)$ such that $g({x})={y}$. Thus a regular system coincides with the orbit of any of its points under its symmetry group. 

For a point ${x}$ in a Delone set $X$ and for $\rho\geq 0$, we call the subset 
\[C_{x}(\rho):=B_\rho({x})\cap X\] 
of $X$ the {\em cluster of radius\/} $\rho$, or simply the {\em $\rho$-cluster\/},
of $X$ at ${x}$. Note that clusters are ``centered clusters'', in the sense that ${x}$ is distinguished as the ``center'' of $C_{x}(\rho)$. The \emph{cluster group\/} $S_{x}(\rho)$ of a $\rho$-cluster $C_{x}(\rho)$ is defined as the group of all isometries $g$ of $\mathbb{R}^d$ such that $g({x})={x}$ and $g(C_{x}(\rho))=C_{x}(\rho)$. Thus the cluster group $S_{x}(\rho)$ is a subgroup of the full symmetry group of $C_{x}(\rho)$, namely the stabilizer of ${x}$ in the full symmetry group of $C_{x}(\rho)$ (or equivalently, the symmetry group of the ``centered $\rho$-cluster'' at ${x}$).

The $\rho$-clusters $C_{x}(\rho)$ and $C_{{x}'}(\rho)$ at two points ${x},{x}'$ of $X$ are called \emph{equivalent} if there exists an isometry $g$ of $\mathbb{R}^d$ such that $g({x})={x}'$ and $g(C_{x}(\rho))=C_{{x}'}(\rho)$. Note that $g$ is not required to be a symmetry of $X$. Equivalence
is a stronger requirement than mere congruence of the sets $C_{x}(\rho)$ and $C_{{x}'}(\rho)$ under $g$, since the isometry $g$ must also map the center ${x}$ of $C_{x}(\rho)$ to the center $x'$ of $C_{{x}'}(\rho)$.

For any $\rho>0$ the set of $\rho$-clusters of $X$ is partitioned into classes of equivalent $\rho$-clusters. By $N_{X}(\rho)$ we denote the number of equivalence classes of $\rho$-clusters.
We call $N_X$ the {\em cluster counting function\/} of $X$. Clearly, $N_{X}(\rho)=1$ for $\rho<2r$, since then $C_{x}(\rho)=\{{x}\}$ for each ${x}\in X$.

One of the main problems in the local theory of Delone sets is to find small positive numbers $\rho$ with the property that each Delone set $X$ of type $(r,R)$ with mutually equivalent $\rho$-clusters must necessarily be a regular system. The smallest such number $\rho$, denoted $\widehat{\rho_d}=\widehat{\rho_d}(r,R)$, is called the {\em regularity radius\/} and a priori depends on the dimension $d$ and the parameters $r$ and $R$.
\smallskip

It is instructive to look at the 1-dimensional case. A Delone set $X$ on the line is a discrete point set, in which points occur in a natural order and there is an obvious notion of adjacency of points, and the distance between adjacent points is bounded. If $X$ is of type $(r,R)$, then 
\[2r\leq |x-y|\leq 2R\] 
for any two adjacent points $x,y$ of $X$. Given two positive real numbers $a$ and $b$ with $a\leq b$, we call a discrete set $X$ on the line an {\it $ab$-set\/} if the distances between adjacent points of $X$ alternate between $a$ and $b$. An $ab$-set is a Delone set with $r=a/2$ and $R=b/2$. It is easy to see that every $ab$-set is a regular system.

Now suppose that $X$ is a Delone set of type $(r,R)$ on the line with mutually equivalent $\rho$-clusters for some $\rho>0$, and that the $\rho$-clusters of $X$ contain points of $X$ on either side of the center point of the cluster. Then $X$ must necessarily be an $ab$-set, possibly with $a=b$, where $a$ is the smallest non-zero distance of a cluster point from the center, and $b$ is the smallest non-zero distance of a cluster point from the center on the opposite side from the point that determines $a$. It follows that $X$ must be a regular system. 

In any Delone set $X$ of type $(r,R)$, the $2R$-clusters all have the property that points of $X$ lie on either side of the center. Hence, if the $2R$-clusters of $X$ are mutually equivalent, then the previous considerations (with $\rho=2R$) show that $X$ must be a regular system. Hence, for dimension~$1$, the regularity radius satisfies the inequality $\hat{\rho}_{1}\leq 2R$. 

We claim that $\hat{\rho}_{1}=2R$, that is, $\hat{\rho}_1$ cannot be smaller than $2R$. It remains to show that for any positive number $\rho$ with $\rho<2R$ there exists a Delone set $X$ of type $(r,R)$ on the line with mutually equivalent $\rho$-clusters which is not a regular system. So let $0<\rho<2R$. We can construct the point set $X$ by placing points along the line such that adjacent points alternately are at a distance $\rho$ or at a distance strictly between $\rho$ and $2R$ from each other. Then each $\rho$-cluster of $X$ consists of just two points at distance $\rho$, one of which is the center of the cluster. Hence $X$ has mutually equivalent $\rho$-clusters. On the other hand, if any two of the distances which lie between $\rho$ and $2R$ are mutually distinct, then clearly $X$ cannot be a regular system. Thus any such set $X$ is a Delone set of type $(r,R)$, with $r\leq\rho/2$, such that $X$ has mutually equivalent $\rho$-clusters but $X$ is not a regular system. It follows that $\hat{\rho}_{1}=2R$.\smallskip

\section{Construction of Engel Sets}
\label{constrengel}

In this section we present a construction of a certain family of Delone sets in $d$-dimensional space $\mathbb{R}^d$ which we call {\em Delone sets of Engel type\/}, or simply {\em Engel sets\/}. Later these sets are used to establish the new lower bound for the regularity radius. Our construction was inspired by the construction of a non-isohedral polyhedral tiling of 3-space described in Engel~\cite{eng2}. 

The reader wishing to first see examples of Engel sets in dimensions 2 and 3 may skip ahead to Section~\ref{smalldim} and then return to the present section for the discussion of the general $d$-dimensional case. There are (by definition) no Engel sets in dimension~1.
\smallskip

Throughout this section we assume that $d\geqslant 2$. We let $e_1,\dots,e_d$ denote the standard basis in $\mathbb R^d$ and write points of $\mathbb{R}^d$ in row notation in the form $x=(x_{1},\ldots,x_{d})$.

The Engel sets $X$ are determined by four parameters:\ a doubly-infinite integer sequence $A$ with properties (A1), (A2) and (A3) below, and three positive real numbers $a$, $b$ and $\delta$. We write $X=X(A,a,b,\delta)$. Later we place restrictions on the sequence $A$ and the three parameters $a$, $b$ and $\delta$.  

The doubly-infinite integer sequences $A$ are of the form 
$$
A=\big(a_i\big)_{i=-\infty}^{\infty}=(\dots,a_{-2},a_{-1},a_0,a_1,a_2,\dots).
$$
Later the terms $a_i$ of $A$ will determine signed standard basis vectors of $(d-1)$-space $\mathbb{R}^{d-1}$ employed in the construction of Engel sets. The sequence $A$ is required to satisfy the following properties:
\begin{itemize}
\item[(A1)] $a_i\in\{\pm 1,\dots,\pm (d-1)\}$ for each $i\in\mathbb{Z}$;
\item[(A2)] $|a_i|=|a_{i+d-1}|$ for each $i\in\mathbb{Z}$;
\item[(A3)]  $\{|a_1|,\dots,|a_{d-1}|\}=\{1,\dots,d-1\}$.
\end{itemize}
Note that the defining properties (A1), (A2) and (A3) all depend on $d$. The condition (A1) says that the term $a_i$ takes only values from among $\pm 1,\dots,\pm (d-1)$, and condition (A2) means that the related sequence of absolute values, $(|a_i|)_{i=-\infty}^{\infty}$, is ``$(d-1)$-periodic'' under shifts of indices. Thus this  sequence of absolute values is completely determined by the $d-1$ absolute values $|a_1|,\dots,|a_{d-1}|$, which by condition (A3) take precisely the values $1,\dots,d-1$ up to permutation, and thus are mutually different. We call the terms $a_1,\ldots,a_{d-1}$ the {\em initial terms\/} of the sequence $A$.

When $d=2$ the three conditions (A1), (A2) and (A3) simply reduce to the single condition that $a_i=\pm 1$ for each $i$. 

When $d=3$ condition (A1) says that $a_i = \pm 1, \pm2$ for each $i$; condition (A2) requires that $|a_i|=|a_{i+2}|$ for each $i$; and (A3) means that the absolute values of the initial terms $a_1,a_2$ are $1$ and $2$, up to permutation.\smallskip

The Engel sets $X$ in $d$-space $\mathbb{R}^d$ are built layer by layer from translates of a scaled copy of the $(d-1)$-dimensional standard cubic lattice,  
$$
Y=\{2a(m_1,\dots,m_{d-1}): m_{1},\ldots,m_{d-1}\in\mathbb Z\} = 2a\mathbb Z^{d-1},
$$
with grid size $2a$. It is not difficult to see that this grid $Y$ is a regular $\left(a,a\sqrt{d-1}\right)$-system in real $(d-1)$-space $\mathbb{R}^{d-1}$ (here viewed as the linear subspace $\mathbb{R}^{d-1}\times\{0\}$ of $\mathbb{R}^d$). Now, for all sequences $A$ with properties (A1), (A2) and (A3) as above, and for all $a,b,\delta>0$, define the discrete subset $X:=X(A,a,b,\delta)$ of $\mathbb{R}^d$ as a {\em layered set\/} by the following rules:
\begin{itemize}
\item[(E1)] $X$ consists of {\em layers\/} $X_m\subset \{x\in\mathbb  R^d\colon x_d=2bm\}$, at {\em levels\/} $m\in\mathbb Z$, so that
\[X=\bigcup_{m\in \mathbb{Z}} \,X_{m};\]
\item[(E2)] $X_0=Y\times\{0\}=\{(y,0)\in\mathbb{R}^{d-1}\!\times\mathbb R=\mathbb R^d\colon y\in Y\}$;
\item[(E3)] $X_{2i+1}=X_{2i}+2be_d$ for each $i\in \mathbb Z$;
\item[(E4)] $X_{2i}=X_{2i-1}+2be_d+\delta u_i$ for each $i\in\mathbb Z$, where $u_i:={\rm sign}(a_i)e_{|a_i|}$.
\end{itemize}
Here ${\rm sign}(a_i)$ denotes the {\em sign\/} of $a_i$ and is defined as $+1$ if $a_i$ is positive, and $-1$ if $a_i$ is negative. 

See Figure~\ref{pict:engelset} for an example of an Engel set in dimension 3. In the next section we elaborate on Engels sets in dimensions 2 and 3. 

\begin{figure}
\begin{center}
\includegraphics[width=0.15\textwidth]{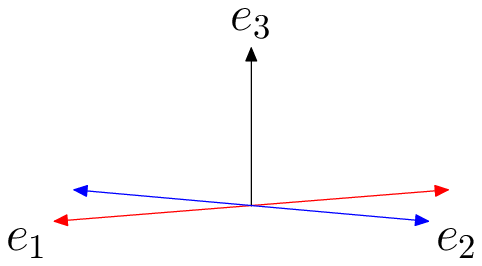}
\hskip 0.5cm
\includegraphics[scale=0.5]{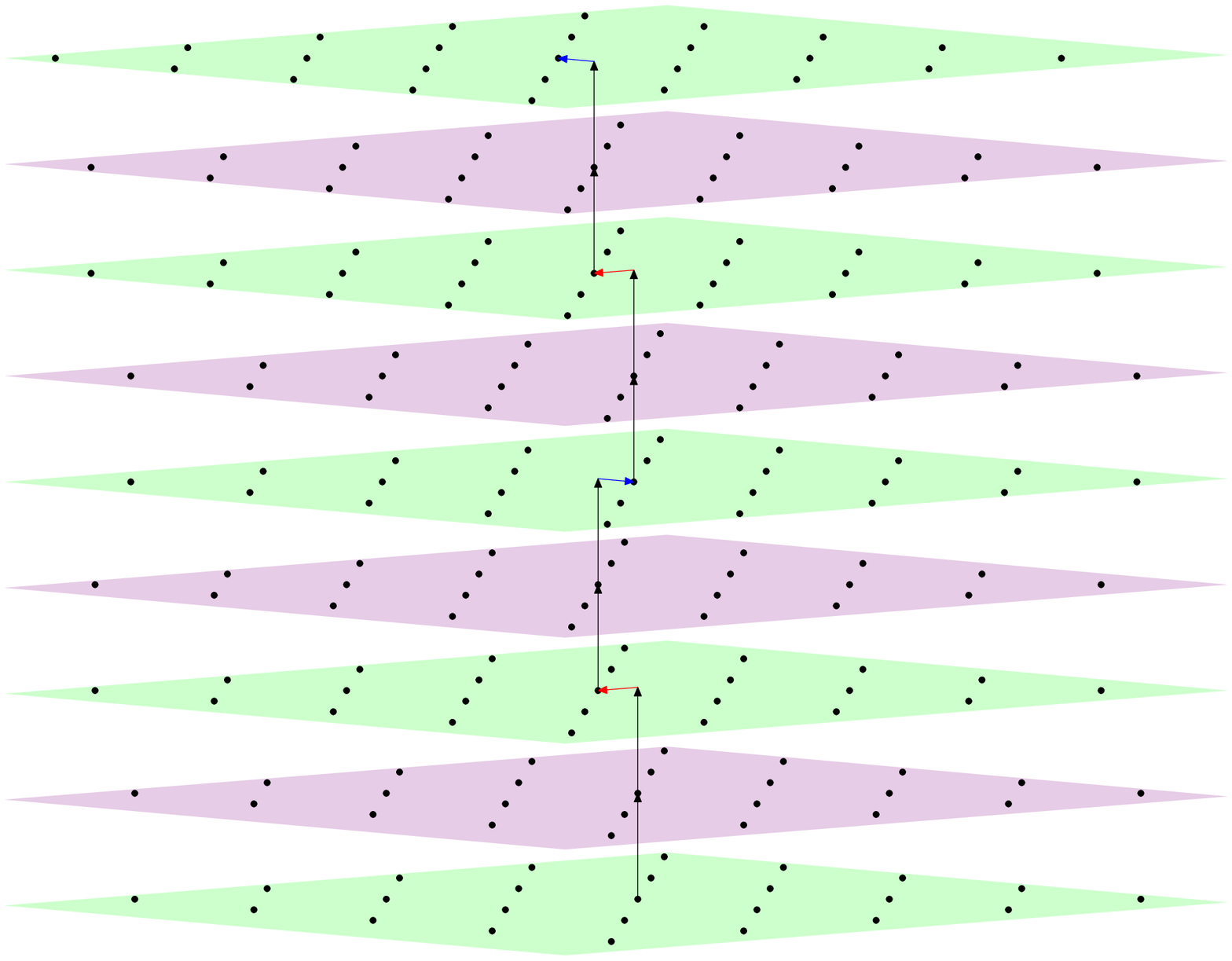}
\hskip 0.5cm
\includegraphics[scale=0.5]{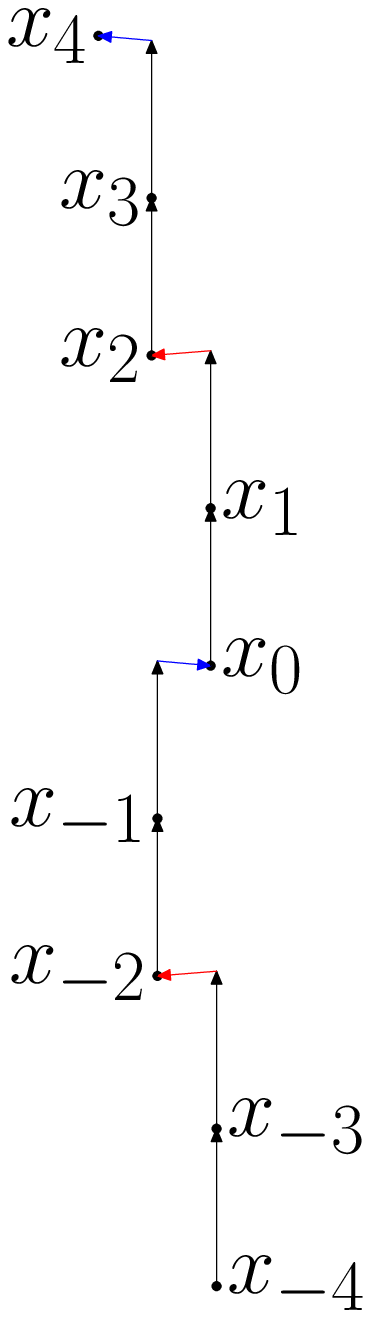}
\end{center}
\caption{Construction of an Engel set in $\mathbb{R}^3$. The small figure on the left shows the standard basis $e_1,e_2,e_3$ of $\mathbb{R}^3$. The vectors $\pm e_1$ are colored in red; the vectors $\pm e_2$ are colored in blue; and the vector $e_3$ is colored in black. The central part of the figure shows a part of the Engel set constructed using a sequence $A$ of the form $A=(\ldots,1,2,1,-2,\ldots)$. Vertical shifts by the vector $be_3$ are shown in black, and horizontal shifts by the vectors $\pm \delta e_1$ and $\pm \delta e_2$ are shown in their respective colors, red or blue. The even and odd layers are colored in green and purple respectively. The right part of the figure shows the sequence $x_{-4},x_{-3},\ldots,x_3,x_4$ associated with the point $x_0$ in the middle layer as defined by the rules (X1), (X2), and (X3) described later in the text.}
\label{pict:engelset}
\end{figure}

Notice that the $(d-1)$-dimensional grid $Y$ occurs as layer $X_0$ in the form $Y\times\{0\}$ at level $0$, and that each layer of $X$ is a translate of $Y$. Any two {\em adjacent\/} layers of $X$ lie in parallel hyperplanes which are at a distance~$2b$. The layer $X_{2i+1}$ at an odd level $2i+1$ is obtained from the layer $X_{2i}$ at the previous level~$2i$ by a ``vertical'' shift by $2be_d$. However, the layer $X_{2i}$ at an even level $2i$ is obtained from the layer $X_{2i-1}$ at the previous level $2i-1$ by a ``vertical'' shift by $2be_d$ followed by a ``horizontal'' shift by $\delta u_i$, where $u_i$ is determined by the $i$-th term $a_i$ of the sequence~$A$ and is given by 
\[u_i:={\rm sign}(a_i)e_{|a_i|}.\] 

Note that the entire set $X$ is invariant under shifts by vectors from the ``horizontal'' grid~$Y$ (that is, the layer $X_0$), and thus under shifts by $2ae_j$ for each $j=1,\dots,d-1$. However, $X$~is not invariant under vertical shifts by $2be_d$. 
\smallskip

Thus Engel sets are discrete layered structures in $d$-space $\mathbb{R}^d$ defined by the conditions (E1)--(E4) above. 

We first show that Engel sets are indeed Delone sets when $b>a$. In fact, in this case we have the following proposition.

\begin{proposition}
\label{delengel}
For all doubly-infinite integer sequences $A$ as above, and for all positive real numbers $a,b,\delta$ with $b>a$, the Engel set $X=X(A,a,b,\delta)$ is an $\left(a,\sqrt{b^2+(d-1)a^2}\right)$-system.
\end{proposition}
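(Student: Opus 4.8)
The plan is to check the two defining properties of an $(r,R)$-system directly for $r=a$ and $R=\sqrt{b^2+(d-1)a^2}$. Recall that $B^o_a(y)$ contains at most one point of $X$ for every $y$ precisely when the minimum distance between distinct points of $X$ is at least $2a$, and that $B_R(y)$ contains at least one point of $X$ for every $y$ precisely when the covering radius of $X$ does not exceed $R$. So I would reduce the proposition to these two quantitative statements and verify them using the layered structure (E1)--(E4). The key structural fact I would record first is that each layer $X_m$ is a translate of the cubic grid $Y=2a\mathbb{Z}^{d-1}$ lying in the hyperplane $\{x_d=2bm\}$: this follows by induction from (E2)--(E4), since every shift in (E3) and (E4) preserves the grid and raises the level by one. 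Consequently, any two points in distinct layers $X_m,X_{m'}$ have their $d$-th coordinates differing by $2b|m-m'|$.

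For the packing bound I would separate two cases. Two distinct points in the same layer lie in a translate of $2a\mathbb{Z}^{d-1}$, so their distance is at least the grid size $2a$, with equality attained. Two points in different layers differ in the $d$-th coordinate by at least $2b$, so their distance is at least $2b$, which exceeds $2a$ because $b>a$; this is exactly where the hypothesis $b>a$ enters. Hence the global minimum distance equals $2a$, realized within a single layer, and $r=a$ is admissible.

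For the covering bound I would take an arbitrary $y=(y_1,\dots,y_d)\in\mathbb{R}^d$ and first choose a nearest layer: since the layer levels $2bm$ are spaced $2b$ apart, there is an $m$ with $|y_d-2bm|\le b$. Projecting $y$ onto the hyperplane of $X_m$ and using that $X_m$ is a full copy of $2a\mathbb{Z}^{d-1}$, whose covering radius in $\mathbb{R}^{d-1}$ is the half-diagonal $a\sqrt{d-1}$ of a fundamental cube of edge $2a$, I obtain a point $x\in X_m$ whose first $d-1$ coordinates differ from those of $y$ by at most $a\sqrt{d-1}$. Combining the horizontal and vertical estimates by the Pythagorean theorem gives $|y-x|^2\le (d-1)a^2+b^2$, so $|y-x|\le\sqrt{b^2+(d-1)a^2}=R$, proving that the covering radius is at most $R$.

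I do not expect a genuine obstacle here: the argument is essentially bookkeeping once the structural fact that every layer is an isometric copy of $Y$ in its own hyperplane is established. The only two points requiring a little care are the use of $b>a$ to guarantee that inter-layer pairs never beat the in-layer minimum $2a$, and the computation of the covering radius $a\sqrt{d-1}$ of the scaled cubic lattice $2a\mathbb{Z}^{d-1}$, which is the circumradius of its fundamental cube.
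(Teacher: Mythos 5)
Your proposal is correct and follows essentially the same route as the paper's proof: the packing bound from $b>a$ forcing the minimum distance to be $2a$ (attained within a layer), and the covering bound by moving at most $b$ vertically to the nearest layer, then at most $a\sqrt{d-1}$ within the copy of $2a\mathbb{Z}^{d-1}$, combined via the Pythagorean theorem. Your write-up is in fact slightly more explicit than the paper's (the case split for the packing bound and the inductive layer-structure observation are left implicit there), but the underlying argument is the same.
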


\begin{proof}
Let ${y}$ be any point in $\mathbb{R}^d$, and set $r:=a$ and $R:=\sqrt{b^2+(d-1)a^2}$. We need to show that the open ball $B^o_r({y})$ contains at most one point of $X$, and that the closed ball $B_R({y})$ contains at least one point of~$X$.

The first part is simple. In fact, since $b>a$, the smallest distance between two points in $X$ is $2a=2r$. Hence $B^o_r({y})$ contains at most one point of $X$.

Consider the second part. Starting from ${y}$ we can reach a point from $X$ by traveling by a distance of at most $b$ in the direction orthogonal to layers of $X$ to the layer of $X$ closest to $y$, and then traveling from the new point by a distance of at most $a\sqrt{d-1}$ to the closest point of $X$ in this layer (a maximal empty ball for the $2a$-dilation of the $(d-1)$-dimensional cubic lattice $\mathbb{Z}^{d-1}$, that is, for $Y$, has radius $a\sqrt{d-1}$). The Pythagoras Theorem then gives the necessary bound of $\sqrt{b^2+(d-1)a^2}$ for the distance of $y$ to the closest point in $X$. Thus $B_R({y})$ contains at least one point of~$X$. Note that the bound for the distance between $y$ and a point of $X$ is sharp for the point ${y}=(b,a,\ldots,a)$ halfway between the layers $X_0$ and~$X_1$.
\end{proof}

From now on we assume that $b>a$. We also retain the definitions of the parameters $r$ and $R$ from the proof of Proposition~\ref{delengel}, that is, 
\begin{equation}
\label{rR}
r:=a,\;\; R:=\sqrt{b^2+(d-1)a^2}.
\end{equation}
Then Proposition~\ref{delengel} says that $X$ is an $(r,R)$-system. Note that the type $(r,R)$ of $X$ is only determined by the parameters $a$ and $b$, and that 
\[R> r\sqrt{d}\]
since $b>a$. Conversely, if $r$ and $R$ are positive real numbers such that $R> r\sqrt{d}$, then there exist positive real numbers $a$ and $b$ with $b>a$ such that $(r,R)$ is the type of all Delone sets of the form $X=X(A,a,b,\delta)$.

We later impose further conditions on the four parameters $A,a,b,\delta$ of $X$.

\begin{remark}\label{engel_gen}
There is an obvious and natural generalization of the notion of Engel set allowing different distances between adjacent layers (unevenly spaced layers). In this case two distances between adjacent layers, say $2b$ and $2b'$, should alternate giving rise of a new set depending on parameters $A, a, b, b', \delta$. We will also refer to these more general sets as Engel sets.
\end{remark}

\section{Engel Sets in Dimensions 2 and 3}
\label{smalldim}

In this section we discuss the structure of Engel sets in small dimensions, 2 and 3. As before there are four parameters involved, namely a doubly-infinite integer sequence $A=(a_i)_{i=-\infty}^{\infty}$ and three positive real parameters $a$, $b$ and $\delta$. Due to the nature of $A$, the formalism for the construction of corresponding Engel sets $X(A,a,b,\delta)$ simplifies considerably in dimensions~$2$ and $3$. 

\subsection{Planar Engel sets}
\label{planar}

When $d=2$ the defining properties (A1), (A2) and (A3) for the doubly-infinite integer sequence $A=(a_i)_{i=-\infty}^{\infty}$ reduce to the single condition that 
\[ a_{i} = \pm 1\;\;\, (i\in\mathbb{Z}).\]
Thus $A$ can be any doubly-infinite sequence of $1$'s and $-1$'s. The standard basis vectors of the plane $\mathbb{R}^2$ are given by $e_{1}=(1,0)$ and $e_{2}=(0,1)$, and the points in $\mathbb{R}^2$ are denoted as usual by $(x,y)$.

The Engel sets $X=X(A,a,b,\delta)$ in $\mathbb{R}^2$ consist of ``layers'' of horizontal 1-dimensional grids of grid size $2a$ (in a sense, copies of $Y=2a\mathbb{Z}$). More precisely, 
\[X=\bigcup_{m\in \mathbb{Z}} \,X_{m},\]
where each layer $X_m$ is a translate of the layer
\[X_{0}:=Y\times \{0\}=2a\mathbb{Z}\times\{0\}\] 
lying in the horizontal line $y=2mb$ in $\mathbb{R}^2$. Note that any two adjacent horizontal lines are at distance $2b$ from each other. Each layer of $X$ at an odd level $2i+1$ ($i\in\mathbb{Z}$) is simply obtained from the layer at the previous even level $2i$ by a vertical shift by the vector $2be_{2}=(0,2b)$, that is, 
\[ X_{2i+1}=X_{2i}+ (0,2b).\]
On the other hand, since $|a_i|=1$ for all $i\in\mathbb{Z}$, each layer at an even level $2i$ ($i\in\mathbb{Z}$) is obtained from the layer at the previous odd level $2i-1$ by a shift by the vector 
\[2be_2+\delta u_i = 2be_{2}+ {\rm sign}(a_i)\delta e_{1} = ({\rm sign}(a_i)\delta,2b),\]
that is, 
\[X_{2i}=X_{2i-1}+({\rm sign}(a_i)\delta,2b).\]
In other words, while going from $X_{2i}$ to $X_{2i+1}$ is straightforward and just involves a vertical shift by $(0,2b)$, the shift involved in passing from $X_{2i-1}$ to $X_{2i}$ is determined by $\delta$ and the sign of the $i^{th}$ term $a_i$ of the sequence $A$ (if $a_i>0$ this shift is by $(\delta,2b)$, and if $a_{i}<0$ the shift is by $(-\delta,2b)$).

From Proposition~\ref{delengel} we know that, when $d=2$ and $b>a$, the parameters $r$ and $R$ giving the type $(r,R)$ of $X(A,a,b,\delta)$ as a Delone set are given by
\[ r=a, \;\;R=\sqrt{a^2+b^2}.\]

Note that the special features of Engel sets $X(A,a,b,\delta)$ in any dimension are most apparent and amplified when the parameters $a,b,\delta$ are chosen in such a way that $b$ is (much) larger than $a$, and $\delta$ is (much) smaller than $a$. However, a priori we are not making these assumptions on $a$, $b$ and~$\delta$.

As an example we consider the planar Engel set $X(A,a,b,\delta)$ obtained for the following parameter values. Suppose $A$ is the sequence 
\[(\ldots,1,1,-1,1,1,-1,1,1,-1,\ldots),\] 
with $a_{1}=a_{2}=1$ and $a_{3}=-1$, which repeats $(1,1,-1)$-strings indefinitely, in either direction. Suppose further that $a=5$, $b=12$, and $\delta=1$, so that $r=a=5$ and $R=\sqrt{a^{2}+b^2}=13$. (For simplicity we have chosen integer values for $a$ and $b$ such that $R$ is also an integer.) For the corresponding Engel set $X(A,a,b,\delta)$ we recorded the thirteen layers $X_{-6},X_{-5},\ldots,X_6$ in Table~\ref{planarengel}; see the Figure~\ref{pict:2dim-engel}. The second column in the table describes the collection of points in a layer $X_i$. For example, layer $X_2$ is given by 
\[\begin{array}{rl}
X_{2}&=(1+10\mathbb{Z})\!\times\! \{48\}\!\!\!=\{(1+10j,48)\mid j\in\mathbb{Z}\}\\[.04in]
&= (1,48)+\,Y\!\times\!\{0\} \\[.04in]
&= \{\ldots,(-19,48),(-9,48),(1,48),(11,48),(21,48),\ldots\},
\end{array}\]
and is contained in the horizontal line $y=48=4b$ of $\mathbb{R}^2$.

\begin{table}
\centering
\begin{tabular}{|l|l|l|}
\hline
Layer $X_m$& Coordinates& Relationship to Layer $X_{m-1}$\\
\hline
$X_{-6}$&$(-1+10\mathbb{Z})\times \{-144\}$&$X_{-7} + (-1,24)$\\[.02in]
$X_{-5}$&$(-1+10\mathbb{Z})\times \{-120\}$&$X_{-6} + (0,24)$\\[.02in]
$X_{-4}$&$10\mathbb{Z}\times \{-96\}$&$X_{-5} +(1,24)$ \\[.02in]
$X_{-3}$&$10\mathbb{Z}\times \{-72\}$&$X_{-4} + (0,24)$ \\[.02in]
$X_{-2}$&$(1+10\mathbb{Z})\times \{-48\}$&$X_{-3} +(1,24)$ \\[.02in]
$X_{-1}$&$(1+10\mathbb{Z})\times \{-24\}$&$X_{-2} + (0,24)$ \\[.02in]
$X_{0}$&$10\mathbb{Z}\times \{0\}$&$X_{-1} + (-1,24)$ \\[.02in]
$X_{1}$&$10\mathbb{Z}\times \{24\}$&$X_{0}+(0,24)$ \\[.02in]
$X_{2}$&$(1+10\mathbb{Z})\times \{48\}$&$X_{1}+(1,24)$ \\[.02in]
$X_{3}$&$(1+10\mathbb{Z})\times \{72\}$&$X_{2}+(0,24)$ \\[.02in]
$X_{4}$&$(2+10\mathbb{Z})\times \{96\}$&$X_{3}+(1,24)$ \\[.02in]
$X_{5}$&$(2+10\mathbb{Z})\times \{120\}$&$X_{4}+(0,24)$ \\[.02in]
$X_{6}$&$(1+10\mathbb{Z})\times \{144\}$&$X_{5}+(-1,24)$\\[.02in]
\hline
\end{tabular}
\caption{Coordinates for the layers $X_{-6},X_{-5},\ldots,X_6$ of the planar Engel set $X(A,a,b,\delta)$, with parameters $A=(\ldots,1,1,-1,1,1,-1,1,1,-1,\ldots)$, $a=5$, $b=12$, and $\delta=1$. This Engel set is a Delone set of type $(5,13)$ with mutually equivalent clusters of radius $48$.}
\label{planarengel}
\end{table}

\begin{figure}
\begin{center}
\includegraphics[width=0.8\textwidth]{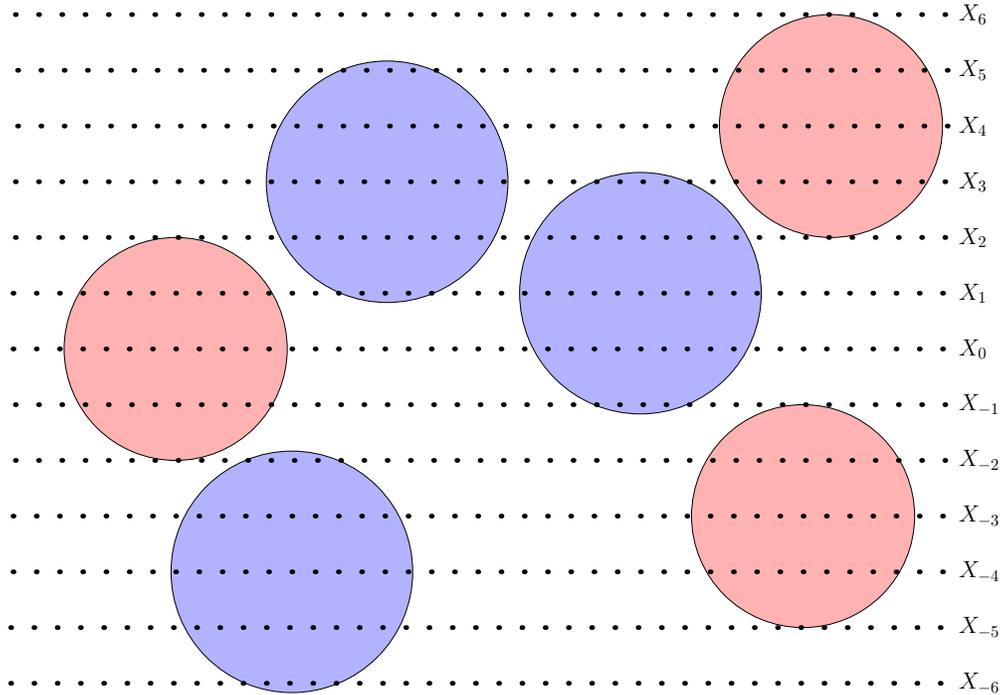}
\end{center}
\caption{An example of a planar Engel set with highlighted $4R$-clusters (blue) and $(4R-\varepsilon)$-clusters (red).}
\label{pict:2dim-engel}
\end{figure}

Later in this paper we establish a general result from which it will follow that this particular Engel set $X$ has the property that its clusters of radius~$48=4R-4$ are mutually equivalent. In fact, if we set $\varepsilon:=4$ then the parameters $a$ and $b$ satisfy the conditions~(\ref{crucial}) of the proof of Theorem~\ref{thm:2dR}, namely $a<b$ and $a^{2}<\varepsilon b/2$, so that Theorem~\ref{thm:2dR} shows that $X$ has mutually equivalent clusters of radius $4R-\varepsilon=48$; see the red clusters in Figure~\ref{pict:2dim-engel}. On the other hand, by the particular choice of $A$, this Engel set $X$ cannot be a regular system by Corollary~\ref{cor:regularity}. (In fact, Corollary~\ref{cor:regularity} says that, up to isometry, regular systems can only arise if the terms $a_i$ of $A$ are all equal to $1$, are all equal to $-1$, or alternate between $1$ and $-1$.) Then in turn, being a non-regular Engel set, $X$ cannot have mutually equivalent clusters of radius $4R=52$ by Theorem~\ref{enreg}; see the blue clusters in Figure~\ref{pict:2dim-engel}. The two blue clusters on the left are equivalent, while the rightmost blue cluster is not equivalent to them.  In the two blue clusters on the left, a vertical ``column'' of points takes two small shifts to the right, while in the rightmost blue cluster, a vertical ``column'' takes one shift to the right and one shift to the left. Hence $N_X(48)=1$, but $N_X(52)>1$. 

For the current example, the radius can be slightly increased from $48$ to 48.15 and still have mutually equivalent clusters; that is, even $N_X(48.15)=1$. This can be seen by applying Theorem~\ref{thm:2dR} with a slightly smaller $\varepsilon$ than $4$. 

Thus, as the above Engel set shows, mutual equivalence of clusters of radius 48 (or even $48.15$) is not enough to imply regularity of a Delone set of type (5,13). 

In our example, the cluster groups of clusters of radius greater than or equal to $2R=26$ are trivial. This is consistent with Theorem~\ref{Gsym} (for $k=1$), which we establish in the next section.

\subsection{Engel sets in 3 Dimensions}
\label{3space}

The standard basis vectors of 3-space $\mathbb{R}^3$ are given by $e_{1}=(1,0,0)$, $e_{2}=(0,1,0)$, and $e_{3}=(0,0,1)$, and points in $\mathbb{R}^3$ are denoted by $(x,y,z)$. The defining properties (A1), (A2) and (A3) for the doubly-infinite integer sequence $A=(a_i)_{i=-\infty}^{\infty}$ now reduce to the following three conditions:
$$
a_i=\pm 1,\pm 2\;\, (i\in\mathbb{Z}),\;\;\, (|a_1|,|a_2|) = (1,2),(2,1), |a_{i+2}|=|a_{i}|\;\,(i\in\mathbb{Z}).
$$
The three remaining parameters $a,b,\delta$ are positive real numbers. 

The Engel sets $X=X(A,a,b,\delta)$ in $\mathbb{R}^3$ consist of ``layers'' of horizontal 2-dimensional grids of grid size $2a$ (that is, copies of $Y=2a\mathbb{Z}^2$). Now 
\[X=\bigcup_{m\in \mathbb{Z}} \,X_{m},\]
where each layer $X_m$ is a translate of the layer
\[X_{0}:=Y\times \{0\}=2a\mathbb{Z}^2\times\{0\}\] 
lying in the horizontal plane $z=2mb$ in $\mathbb{R}^3$. Here any two adjacent horizontal planes lie at distance $2b$ from each other. Each layer of $X$ at an odd level $2i+1$ ($i\in\mathbb{Z}$) is just obtained from the layer at the previous even level $2i$ by a vertical shift by the vector $2be_{3}=(0,0,2b)$, that is, 
\[ X_{2i+1}=X_{2i}+ (0,0,2b).\]
However, each layer at an even level $2i$ ($i\in\mathbb{Z}$) is derived from the layer at the previous odd level $2i-1$ by a shift by the vector 
\[2be_3+\delta u_i =2be_3 + {\rm sign}(a_i)\delta e_{|a_i|},\]
that is, 
$$
X_{2i}=X_{2i-1}+({\rm sign}(a_i)\delta,0,2b)\;\mbox{ or }\;
X_{2i}=X_{2i-1}+(0,{\rm sign}(a_i)\delta,2b)
$$
according as $|a_i|=1$ or $2$.
While going from $X_{2i}$ to $X_{2i+1}$ is straightforward as in the 2-dimensional and just involves a vertical shift by $(0,0,2b)$, the shift involved in passing from $X_{2i-1}$ to $X_{2i}$ now is determined by $\delta$ and the $i^{th}$ term $a_i$ itself, not just its sign. 

In the 3-dimensional case Proposition~\ref{delengel} is telling us that, when $b>a$, the parameters $r$ and $R$ of $X(A,a,b,\delta)$ are given by
\[ r=a, \;\;R=\sqrt{2a^2+b^2}.\]

Let us look at the example of the Engel set $X(A,a,b,\delta)$ in 3-space obtained for the following parameter values. The infinite sequence $A$ is given by 
\[A=(\ldots,1,2,-1,2,1,2,-1,2,1,2,-1,2,\ldots),\] 
where $a_{1}=1$, $a_{2}=2$, $a_{3}=-1$, and $a_{4}=2$, and $(1,2,-1,2)$-strings are repeated indefinitely in either direction. The remaining parameters are $a=4$, $b=7$, and $\delta=1$, so that $r=a=4$ and $R=\sqrt{2a^{2}+b^2}=9$. Thus the corresponding Engel set $X(A,a,b,\delta)$ is a Delone set of type~$(4,9)$. The thirteen layers $X_{-6},X_{-5},\ldots,X_6$ of $X(A,a,b,\delta)$ are listed in Table~\ref{spaceengel}. The second column again describes the collection of points in a layer $X_i$ using notation similar to the planar case. For example, layer $X_4$ is given by 
\[\begin{array}{lll}
X_{4}&=&(1+8\mathbb{Z})\!\times\! (1+8\mathbb{Z})\!\times\! \{56\} = \{(1+8j,1+8k,56)\mid j,k\in\mathbb{Z}\}\\[.04in]
&=& (1,1,56) +8\mathbb{Z}\!\times\! 8\mathbb{Z}\!\times\! \{56\} \\[.04in]
&=&(1,1,56) +Y\!\times\! \{56\},
\end{array} \]
and is contained in the line horizontal plane $z=56=8b$ of $\mathbb{R}^3$.

\begin{table}
\centering
\begin{tabular}{|l|l|l|}
\hline
Layer $X_m$& Coordinates& Relationship to Layer $X_{m-1}$\\
\hline
$X_{-6}$&$(1+8\mathbb{Z})\times (-2+8\mathbb{Z})\times \{-84\}$&$X_{-7} + (1,0,14)$\\[.02in]
$X_{-5}$&$(1+8\mathbb{Z})\times (-2+8\mathbb{Z})\times \{-70\}$&$X_{-6} + (0,0,14)$\\[.02in]
$X_{-4}$&$(1+8\mathbb{Z})\times (-1+8\mathbb{Z})\times \{-56\}$&$X_{-5} +(0,1,14)$ \\[.02in]
$X_{-3}$&$(1+8\mathbb{Z})\times (-1+8\mathbb{Z})\times \{-42\}$&$X_{-4} + (0,0,14)$ \\[.02in]
$X_{-2}$&$8\mathbb{Z}\times (-1+8\mathbb{Z})\times \{-28\}$&$X_{-3} +(-1,0,14)$ \\[.02in]
$X_{-1}$&$8\mathbb{Z}\times (-1+8\mathbb{Z})\times \{-14\}$&$X_{-2} + (0,0,14)$ \\[.02in]
$X_{0}$&$8\mathbb{Z}\times 8\mathbb{Z}\times \{0\}$&$X_{-1} + (0,1,14)$ \\[.02in]
$X_{1}$&$8\mathbb{Z}\times 8\mathbb{Z}\times \{14\}$&$X_{0}+(0,0,14)$ \\[.02in]
$X_{2}$&$(1+8\mathbb{Z})\times 8\mathbb{Z}\times \{28\}$&$X_{1}+(1,0,14)$ \\[.02in]
$X_{3}$&$(1+8\mathbb{Z})\times 8\mathbb{Z}\times \{42\}$&$X_{2}+(0,0,14)$ \\[.02in]
$X_{4}$&$(1+8\mathbb{Z})\times (1+8\mathbb{Z})\times \{56\}$&$X_{3}+(0,1,14)$ \\[.02in]
$X_{5}$&$(1+8\mathbb{Z})\times (1+8\mathbb{Z})\times \{70\}$&$X_{4}+(0,0,14)$ \\[.02in]
$X_{6}$&$8\mathbb{Z}\times (1+8\mathbb{Z})\times \{84\}$&$X_{5}+(-1,0,14)$\\[.02in]
\hline
\end{tabular}
\caption{Coordinates for the layers $X_{-6},X_{-5},\ldots,X_6$ of the Engel set $X(A,a,b,\delta)$ in 3-space, with parameters $A=(\ldots,1,2,-1,2,1,2,-1,2,1,2,-1,2,\ldots)$, $a=4$, $b=7$, and $\delta=1$. This Engel set is a Delone set of type $(4,9)$ with mutually equivalent clusters of radius $48$.}
\label{spaceengel} 
\end{table}

Similar remarks as for our planar example also apply to our 3-dimensional example. Appealing as before to 
Theorem~\ref{thm:2dR}, now with $\varepsilon = 14$, we find that the Engel set $X$  in 3-space has the property that its clusters of radius~$40=6R-14$ are mutually equivalent; in fact, the relevant conditions $a<b$ and $a^2<\frac{\varepsilon b}{6}$ are satisfied in this case. On the other hand, by the particular choice of $A$, this Engel set $X$ cannot be a regular system by Corollary~\ref{cor:regularity}. (Note that when $a_{1}=1$ and $a_{2}=2$ only the two sequences $(\ldots,1,2,1,2,1,2,1,2,1,2,1,2,\ldots)$ and $(\ldots,1,2,-1,-2,1,2,-1,-2,1,2,-1,-2,\ldots)$ produce Engel sets which are regular systems.) It follows that, as a non-regular Engel set, $X$ cannot have mutually equivalent clusters of radius $6R=54$ by Theorem~\ref{enreg}. Hence $N_X(40)=1$, but $N_X(54)>1$. 

Figure~\ref{pict:3dim-engel} shows the above Engel set and two of its $6R$-clusters (blue) and $(6R-\varepsilon)$-clusters (red). It is not clear from the picture that the two blue clusters are not equivalent. However, as we will show later in Lemma~\ref{2Lclose}, the only way for these clusters to be equivalent under an isometry is to map corresponding layers to each other. Hence $X_{-3}$ must be mapped to $X_{-1}$, $X_{-2}$ to $X_0$, and so on. In this case the horizontal parts of the shifts between consecutive layers of one cluster must be mapped onto the horizontal parts of the shifts between layers of the other cluster. Thus the vector $e_2$ (horizontal part of the shift from $X_{-1}$ to $X_0$ of the right blue cluster) must be mapped to $-e_1$ (horizontal part of the shift from $X_{-3}$ to $X_{-2}$ of the left blue cluster). On
the other hand, the vector $e_2$ (horizontal part of the shift from $X_{3}$ to $X_4$ of the right blue cluster) must be mapped to $e_1$ (horizontal part of the shift from $X_{1}$ to $X_{2}$ of the left blue cluster). However, these two conditions cannot be simultaneously satisfied by a single isometry. Thus the two clusters cannot be equivalent. More details, as well as the discussion for the general case, can be found in the proof of Lemma \ref{2Lclose}.

\begin{figure}
\begin{center}
\includegraphics[width=0.8\textwidth]{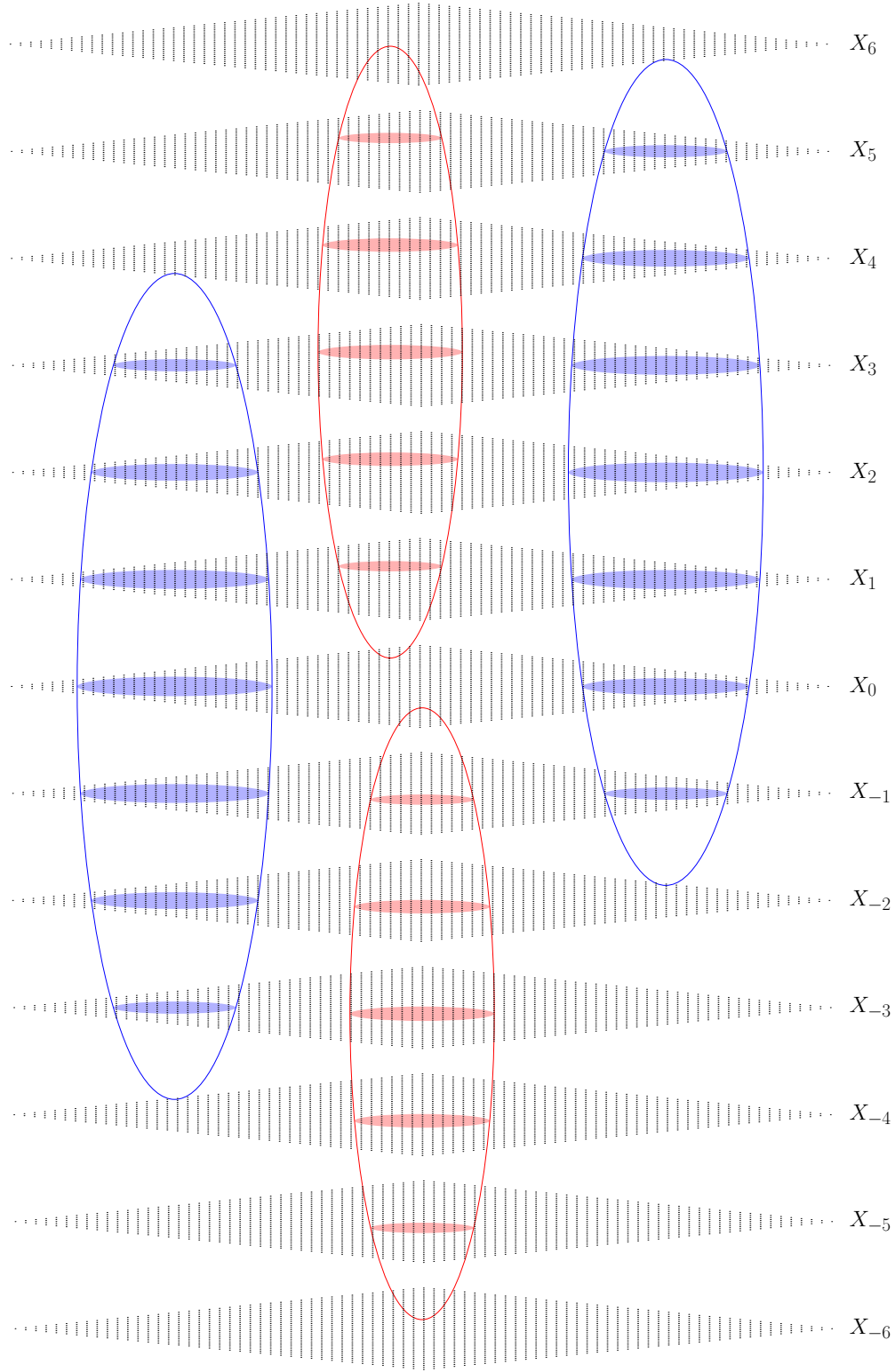}
\end{center}
\caption{An example of an Engel set in 3-space with highlighted $6R$-clusters (blue) and $(6R-\varepsilon)$-clusters (red).}
\label{pict:3dim-engel}
\end{figure}

For this 3-dimensional example the radius can be increased to 40.28 and still have mutually equivalent clusters; that is, $N_X(40.28)=1$; here we can use Theorem~\ref{thm:2dR} with an $\varepsilon$ slightly smaller than $14$. 

Thus, our Engel set in 3-space shows that mutual equivalence of clusters of radius 40 (or even $40.28$) in a Delone set of type (4,9) is not enough to imply regularity. 

In our example, the cluster groups of clusters of radius $2R=18$ have order $2$ and are generated by a reflection (in the plane $x=0$ of $\mathbb{R}^3$), while those for clusters of radius greater than or equal to $4R=36$ are trivial. This is consistent with Theorem~\ref{Gsym} proved in the next section. 

\section{Engel Sets and Regularity}
\label{main}

Throughout this section let $d\geq 2$ (unless said otherwise) and 
\[X:=X(A,a,b,\delta)=\bigcup_{m\in \mathbb{Z}} \,X_{m}\]
be the Engel set in $d$-space $\mathbb{R}^d$ defined as in Section \ref{constrengel} by the doubly-infinite integer sequence $A$ and the three positive real parameters $a,b,\delta$. Let again $b>a$. Then we know from Proposition~\ref{delengel} that $X$ is a Delone set of type $(r,R)$, with $r=a$ and $R=\sqrt{b^2+(d-1)a^2}$. We define 
\[\mathcal{E}:=\{e_1,\dots,e_{d-1}\},\] 
which is the set of the first $d-1$ vectors from the standard basis vectors $e_1,\ldots,e_d$ of $\mathbb{R}^d$.

In the following we often need to consider sets (or rather strings) of successively adjacent layers of $X$, as well as corresponding sets of basis vectors from $\mathcal{E}$ involved in (E4) in the construction of layers at even levels. In particular, for $p\in \mathbb{Z}$ and $1\leqslant k\leqslant d-1$ let $\mathcal{E}_{p,k}$ denote the subset of $\mathcal{E}$ consisting of the basis vectors that appear in the construction of the layers at even levels among the set of $2k+1$ layers
\[X(p,k):= \{X_{p-k},X_{p-k+1},\dots,X_p,\dots,X_{p+k}\}\]
of $X$. Note that in the construction of the layers from $X(p,k)$ we used the rules (E3) and (E4) in total $2k$ times, namely each rule exactly $k$ times. Then, 
\[\mathcal{E}_{p,k}=\{e_{|a_{t_p}|},\dots,e_{|a_{t_{p}+k-1}|}\}\] 
for some integer $t_p$. In particular, when $k=d-1$ the defining properties (A2) and (A3) for the sequence $A$ show that $\mathcal{E}_{p,d-1}=\mathcal{E}$ for each $p\in\mathbb{Z}$.
\smallskip

We first establish a technical lemma describing a sufficient condition for the equivalence of any two $\rho$-clusters of an Engel set $X$.

\begin{lemma}
\label{2dm1Lemma}
Let $\rho>0$. Suppose that, for every $p\in\mathbb{Z}$ and every point $x\in X_p$, the $\rho$-cluster $C_\rho(x)$ of $X$ at $x$ contains only points from layers $X_l$ with $|l-p|\leqslant d-1$. Then the $\rho$-clusters at any two points of $X$ are equivalent; that is, $N_{X}(\rho)=1$.
\end{lemma}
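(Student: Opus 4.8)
The plan is to fix two points $x\in X_p$ and $x'\in X_{p'}$ and to produce an explicit isometry $g$ with $g(x)=x'$ and $g(C_\rho(x))=C_\rho(x')$; since the points are arbitrary this yields $N_X(\rho)=1$. First I would record a concrete description of a cluster. Writing $x=(\bar x,2bp)$ with $\bar x\in\mathbb{R}^{d-1}$, each layer meeting the cluster is a translate of $Y$: for $|j|\le d-1$ one has $X_{p+j}-x=(h_{p,j}+Y)\times\{2bj\}$, where $h_{p,j}\in\mathbb{R}^{d-1}$ is the accumulated horizontal displacement from $X_p$ to $X_{p+j}$, obtained by summing the horizontal parts of the inter-layer shifts. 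By (E3) these horizontal parts vanish for shifts into odd-level layers, while by (E4) a shift into the even-level layer $X_{2i}$ contributes $\delta\,\mathrm{sign}(a_i)e_{|a_i|}$. Using the hypothesis that $C_\rho(x)$ meets only layers with $|j|\le d-1$, the relative cluster is $C_\rho(x)-x=\bigcup_{|j|\le d-1}\big((h_{p,j}+Y)\times\{2bj\}\big)\cap B_\rho(0)$, so it is completely determined by the finitely many vectors $h_{p,j}$.

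The matching of clusters then reduces to a statement about these shift vectors, and the crucial input is the combinatorial fact recorded earlier as $\mathcal{E}_{p,d-1}=\mathcal{E}$: among the $2(d-1)$ shifts connecting the $2d-1$ layers of the window, exactly $d-1$ are nonzero (those into even levels), and by (A2) and (A3) their directions $e_{|a_i|}$ run through $e_1,\dots,e_{d-1}$ each exactly once.

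I would then build $g$ from a signed permutation $L_h$ of the horizontal coordinates together with a sign $\epsilon\in\{\pm1\}$ acting on the $e_d$-axis, setting $L:=L_h\oplus\epsilon$ and $g(y):=L(y-x)+x'$. Choose $\epsilon=+1$ when $p\equiv p'\pmod 2$ and $\epsilon=-1$ otherwise. In the first case a shift into layer $p+j$ is horizontal exactly when the shift into $p'+j$ is, so the nonzero shifts of the two windows sit at the same relative positions; in the second case the vertical reflection carries the relative shift position $j$ to $1-j$, which reverses parity, so the nonzero shifts of the reflected $x$-window again land precisely on those of the $x'$-window. In either case I define $L_h$ to be the signed permutation sending the direction of the $t$-th nonzero shift of the (reflected) $x$-window to that of the corresponding nonzero shift of the $x'$-window, with signs chosen so that $L_h(h_{p,j})=h_{p',\epsilon j}$ for all $|j|\le d-1$ (the reflected case requiring an extra overall sign, but otherwise routine bookkeeping).

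The verification is then immediate. The map $L_h$ is well defined as a signed permutation precisely because each window realizes every direction $e_1,\dots,e_{d-1}$ exactly once, and such maps preserve the cubic grid $Y$; moreover $L=L_h\oplus\epsilon$ is orthogonal, fixes the origin, and preserves $B_\rho(0)$. Since $L_h(h_{p,j})=h_{p',\epsilon j}$, the map $L$ carries $(h_{p,j}+Y)\times\{2bj\}$ onto $(h_{p',\epsilon j}+Y)\times\{2b\epsilon j\}$, that is, it takes the relative cluster of $x$ onto that of $x'$ one layer at a time; hence $g(C_\rho(x))=C_\rho(x')$ and $g(x)=x'$. The one place demanding care — the main obstacle — is checking that the position-matching and the signs assemble into a single globally consistent signed permutation, rather than two incompatible ones for the ``up'' and ``down'' halves of the window. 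This is exactly guaranteed by the ``each direction once'' property of a full window, which is why confining the cluster to $d-1$ layers on either side (the hypothesis) is precisely what makes the construction possible.
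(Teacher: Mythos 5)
Your proof is correct and follows essentially the same route as the paper's: both hinge on the fact that $\mathcal{E}_{p,d-1}=\mathcal{E}$ (each horizontal direction occurs exactly once among the nonzero shifts in a $(2d-1)$-layer window), and both assemble the matching isometry from a signed permutation of $e_1,\dots,e_{d-1}$ together with $\pm 1$ on the $e_d$-axis and a translation taking $x$ to $x'$. The only difference is presentational: the paper handles the opposite-parity case by first reflecting in the hyperplane $x_d=b$ to reduce to the same-parity argument applied across two Engel sets, whereas you fold the sign $\epsilon$ directly into the linear part and do the telescoping bookkeeping $L_h(h_{p,j})=h_{p',\epsilon j}$ in one step.
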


\begin{proof}
We will prove that for any $p,p'\in\mathbb Z$ and any two points $x\in X_p$, $x'\in X_{p'}$ the $\rho$-clusters $C_\rho(x)$ and $C_\rho(x')$ are equivalent. First note that by our assumptions on $X$, the $\rho$-clusters at points in $X_p$ and $X_{p'}$ only contain points from layers in the sets of layers $X(p,d-1)$ or $X(p',d-1)$, respectively. The case $p=p'$ is simple. In fact, since $X$ is invariant under shifts by vectors from $Y$, any two $\rho$-clusters at points in $X_p$ are even equivalent under the translation subgroup of the symmetry group of $X$. Thus to prove equivalence in the general case it is sufficient to show the existence of an isometry of $\mathbb R^d$ that maps the union of the layers in $X(p,d-1)$ to the union of the layers in $X(p',d-1)$.

The $2d-1$ layers in $X(p,d-1)$ and $X(p',d-1)$ are connected by formulas (E3) and (E4). Using the above notation we can say that the layers  in $X(p,d-1)$ involve the $d-1$ shifts by the vectors 
\[2be_d+\delta u_t,\;\; t\in\{t_{p},t_{p+1},\ldots,t_{p+d-2}\}\] 
(recall here that $\{e_{|a_{t_p}|},\dots,e_{|a_{t_{p}+d-2}|}\}=\mathcal{E}_{p,d-1}=\mathcal{E}$), as well as $d-1$ shifts by the vector $2be_d$. The shifts of these two types alternate. For $X(p',d-1)$ the same is true with $p$ replaced by~$p'$.

If $p$ and $p'$ have the same parity, then the orders in which the shift types occur are the same. Hence for an isometry that maps the union of the layers in $X(p,d-1)$ to the union of the layers in $X(p',d-1)$, we may choose a shift that maps $X_p$ to $X_{p'}$, followed by the isometry defined on the standard basis $e_1,\ldots,e_d$ of $\mathbb{R}^d$ via the assignments 
\[u_{t_p+s}\to u_{t_{p'}+s}\;\; (s=0,\dots,d-2)\] 
and $e_d\to e_d$.

If $p$ and $p'$ have different parity, then we first apply the reflection in the hyperplane $x_{d}=b$ of $\mathbb{R}^d$ whose linear part maps $e_d$ to $-e_d$ and fixes $e_1,\ldots,e_{d-1}$. This reflection sends $X_0$ to $X_1$ and maps each layer $X_m$ of $X$ to a layer $X'_{m'}$ of a new layered set $X'=X'(A',a,b,\delta)$ built in a similar way as $X$ (with the same parameters $a,b,\delta$ but a new integer sequence, $A'$).

Then each $X'_{m'}$ is again a translate of $Y$, and $X_0'=X_0$. We can now proceed as before, for two reasons:\   first, the parity of each layer is altered by the reflection in the hyperplane $x_{d}=b$; and second, the proof of the previous case (when $p$ and $p'$ had the same parity) works equally well if the two collections of layers are taken from different Engel sets with the same parameters $a,b,\delta$ (but possibly different sequences $A$ and $A'$) rather than from the same Engel set. This completes the proof.
\end{proof}

We note the following important consequence of the previous lemma. Here we describe sufficient conditions on the parameters $a$ and $b$ which allow us to conclude that any two clusters of $X$ of radius $2dR-\varepsilon$ are equivalent. Recall that the parameter $R$ is given by $R=\sqrt{b^2+(d-1)a^2}$.

\begin{theorem}\label{thm:onecluster} 
Let $d\geq 2$ and $\varepsilon>0$. For all sequences $A$ as above, and all $a,b,\delta>0$ with $a<b$ and $2dR-\varepsilon<2db$, the cluster counting function of the Engel set $X:=X(A,a,b,\delta)$ satisfies $N_{X}(2dR-\varepsilon)=1$. In particular, this property holds when $a^2\leqslant\frac{\varepsilon b}{d(d-1)}$.
\end{theorem}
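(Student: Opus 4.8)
The plan is to derive the cluster-counting statement $N_X(2dR-\varepsilon)=1$ as a direct consequence of Lemma~\ref{2dm1Lemma}. By that lemma, it suffices to verify its hypothesis: that for every $p\in\mathbb{Z}$ and every point $x\in X_p$, the cluster $C_{2dR-\varepsilon}(x)$ contains only points from layers $X_l$ with $|l-p|\leqslant d-1$. Since adjacent layers lie in parallel hyperplanes at vertical distance $2b$, a point in layer $X_l$ has vertical distance at least $2b|l-p|$ from $x\in X_p$. Hence if a point of $X_l$ lies within distance $\rho:=2dR-\varepsilon$ of $x$, we must have $2b|l-p|\leqslant\rho$, so $|l-p|\leqslant\rho/(2b)$. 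The goal is therefore to show that the hypothesis $2dR-\varepsilon<2db$ forces $\rho/(2b)<d$, which gives $|l-p|\leqslant d-1$ since $|l-p|$ is an integer.

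First I would make the vertical-distance estimate precise. Writing $x=(x',2bp)\in X_p$ and any $y=(y',2bl)\in X_l$, I would note $|x-y|\geqslant|x_d-y_d|=2b|l-p|$, since the $d$-th coordinates differ by exactly $2b(l-p)$. Thus $y\in C_\rho(x)$ implies $2b|l-p|\leqslant\rho=2dR-\varepsilon<2db$, whence $|l-p|<d$, i.e. $|l-p|\leqslant d-1$. This verifies the hypothesis of Lemma~\ref{2dm1Lemma} for the radius $\rho=2dR-\varepsilon$, and the lemma then yields $N_X(2dR-\varepsilon)=1$, proving the main assertion.

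For the final ``in particular'' clause, I would show that the stated bound on the parameters, $a^2\leqslant\frac{\varepsilon b}{d(d-1)}$, is sufficient to guarantee the inequality $2dR-\varepsilon<2db$ used above. Recall $R=\sqrt{b^2+(d-1)a^2}$. I would bound $R$ from above using the concavity of the square root (or the elementary estimate $\sqrt{b^2+t}\leqslant b+\frac{t}{2b}$ for $t\geqslant 0$) applied with $t=(d-1)a^2$, giving $R\leqslant b+\frac{(d-1)a^2}{2b}$. Multiplying by $2d$ yields $2dR\leqslant 2db+\frac{d(d-1)a^2}{b}$, so
\[
2dR-\varepsilon\leqslant 2db+\frac{d(d-1)a^2}{b}-\varepsilon.
\]
Under the assumption $a^2\leqslant\frac{\varepsilon b}{d(d-1)}$ we have $\frac{d(d-1)a^2}{b}\leqslant\varepsilon$, so the right-hand side is at most $2db$, and the strict inequality $2dR-\varepsilon<2db$ follows (the slack coming either from strictness of the square-root bound when $a>0$, or by passing to a slightly smaller $\varepsilon$). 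This closes the argument.

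I do not anticipate a serious obstacle here, since the result is essentially a packaging of Lemma~\ref{2dm1Lemma} together with a one-line Pythagorean estimate on vertical separation of layers. The only point requiring mild care is the direction of the inequalities in the ``in particular'' clause — ensuring that the square-root upper bound for $R$ is tight enough that the chosen threshold on $a^2$ really does imply the needed strict inequality $2dR-\varepsilon<2db$ rather than just the non-strict version; the concavity estimate $\sqrt{b^2+t}\leqslant b+\frac{t}{2b}$ handles this cleanly, with strictness automatic whenever $a>0$.
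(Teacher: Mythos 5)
Your proposal is correct and follows essentially the same route as the paper: both parts reduce the claim to Lemma~\ref{2dm1Lemma} by observing that the hypothesis $2dR-\varepsilon<2db$ forces every point of the cluster to lie in a layer at most $d-1$ steps away (since layers $l$ and $p$ are separated by vertical distance $2b|l-p|$), and both derive the ``in particular'' clause from the same estimate $\sqrt{b^2+t}<b+\tfrac{t}{2b}$ (equivalently $\sqrt{1+x}<1+\tfrac{x}{2}$), which is strict since $a>0$. Your write-up is in fact slightly more explicit than the paper's on the first part, which the paper dispatches in one sentence.
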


\begin{proof}
First note that $X$ depends on $\varepsilon$, by the choice of the parameters $a$ and $b$. The first statement of the theorem follows directly from Lemma~\ref{2dm1Lemma}, since the term $2db$ occurring in the condition on $R$ is the distance between two hyperplanes that contain  layers of $X$ that are $2d$ steps apart. For the second statement note that
\[\begin{array}{lcl}
2dR-\varepsilon  &=& 2d\sqrt{b^2+(d-1)a^2}-\varepsilon \\
&=& 2db\sqrt{1+(d-1)\left(\frac{a}{b}\right)^2}-\varepsilon \\
&<& 2db\left(1+\frac{(d-1)a^2}{2b^2}\right)-\varepsilon\\
&=& 2db+\left(\frac{d(d-1)a^2}{b}-\varepsilon\right),\\
\end{array}\]
where we used the trivial inequality $\sqrt{1+x}<1+\frac{x}{2}$ for positive real numbers $x$.

Hence, if $a^2\leqslant\frac{\varepsilon b}{d(d-1)}$ then $2dR-\varepsilon<2db$ and so the condition on $R$ for the first statement is satisfied.
\end{proof}

Next we investigate the cluster groups of clusters in 
\[X=X(A,a,b,\delta)=\bigcup_{p\in \mathbb{Z}} X_p.\] We begin with two technical lemmas. 

For each $p\in \mathbb{Z}$ and each $x\in X_p$ define the sequence of points $(x_j)_{j=-\infty}^{\infty}$ by the following rules (see the right part of Figure \ref{pict:engelset} for an illustration of some of these  points):
\begin{itemize}
\item[(X1)]$x_0=x$;
\item[(X2)]$x_{j+1}=x_j+2be_d\in X_{p+j+1}$, if $p+j$ is even (and $p+j=2i$); 
\item[(X3)]$x_{j+1}=x_j+2be_d+\delta u_i\in X_{p+j+1}$, if $p+j$ is odd and $p+j=2i-1$.
\end{itemize}
Then this sequence of points from $X$ satisfies the following properties.

\begin{lemma}
\label{Lclose}
Suppose $\delta<a$. Let $p\in \mathbb{Z}$ and $x\in X_p$, and let $(x_j)_{j=-\infty}^{\infty}$ be the sequence of points from $X$ associated with $x$ as above. Then, for each $j\in \mathbb Z$, the point $x_j$ is the unique point of the layer $X_{p+j}$ closest to both point $x_{j+1}$ and $x_{j-1}$, with the distances given by
$$
(|x_j-x_{j-1}|,|x_{j+1}-x_j|)=\begin{cases}\left(2b,\sqrt{\delta^2+4b^2}\right),\text{ if }p+j\text{ is odd};\\
\left(\sqrt{\delta^2+4b^2},2b\right),\text{ if }p+j\text{ is even}.
\end{cases}
$$
\end{lemma}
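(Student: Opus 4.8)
The plan is to split the statement into two independent assertions — the numerical values of the two distances, and the uniqueness of the nearest neighbour — and to reduce both to the behaviour of the orthogonal projection $\pi\colon\mathbb{R}^d\to\mathbb{R}^{d-1}$ onto the first $d-1$ coordinates.

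First I would read off the two distances directly from the construction rules (X2) and (X3). The vector joining consecutive points $x_j$ and $x_{j+1}$ equals $2be_d$ when $p+j$ is even (rule (X2)) and $2be_d+\delta u_i$ when $p+j$ is odd (rule (X3)). Since $u_i={\rm sign}(a_i)e_{|a_i|}$ is a unit vector orthogonal to $e_d$, the Pythagorean theorem gives step length $2b$ in the first case and $\sqrt{\delta^2+4b^2}$ in the second. Applying this both to the step $x_{j-1}\to x_j$ (which is vertical exactly when $p+j-1$ is even, i.e.\ when $p+j$ is odd) and to the step $x_j\to x_{j+1}$ (vertical exactly when $p+j$ is even) yields the two cases of the table: the pair $(|x_j-x_{j-1}|,|x_{j+1}-x_j|)$ is $\left(2b,\sqrt{\delta^2+4b^2}\right)$ when $p+j$ is odd and $\left(\sqrt{\delta^2+4b^2},2b\right)$ when $p+j$ is even.

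For the uniqueness claim I would exploit that every point of the layer $X_{p+j}$ shares the same last coordinate $2b(p+j)$. Hence for any point $w$ outside that hyperplane and any $z\in X_{p+j}$ one has $|w-z|^2=(w_d-2b(p+j))^2+|\pi(w)-\pi(z)|^2$, and the first summand is constant in $z$; so the point of $X_{p+j}$ nearest to $w$ is precisely the one whose horizontal projection is nearest to $\pi(w)$ (here $\pi$ restricts to a bijection on the layer, as all its points lie in one hyperplane). Now $\pi(X_{p+j})$ is a translate of the cubic grid $2a\mathbb{Z}^{d-1}$, whose open Voronoi cell about each grid point is the cube of half-side $a$. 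By (X2)/(X3) the horizontal displacement $\pi(x_{j\pm 1})-\pi(x_j)$ is either $0$ or a vector $\pm\delta e_{|a_i|}$ of length $\delta$ lying along a single coordinate axis; since $\delta<a$, this places $\pi(x_{j\pm 1})$ strictly inside the open Voronoi cell of the grid point $\pi(x_j)$. Therefore $\pi(x_j)$ is the unique nearest grid point to each of $\pi(x_{j-1})$ and $\pi(x_{j+1})$, whence $x_j$ is the unique point of $X_{p+j}$ closest to each of $x_{j-1}$ and $x_{j+1}$.

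The only point requiring care — and the sole real obstacle — is the Voronoi argument: one must verify that in every parity case the horizontal displacement is confined to a single coordinate direction and has length strictly below the half-width $a$ of a grid cell, so that no tie with a neighbouring grid point can arise. This is exactly where the hypothesis $\delta<a$ is used, and once it is checked the uniqueness follows at once from the cubic structure of $Y=2a\mathbb{Z}^{d-1}$.
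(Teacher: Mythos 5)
Your proof is correct and takes essentially the same approach as the paper: the paper also reduces uniqueness to minimizing the horizontal offset over the grid $2a\mathbb{Z}^{d-1}$, written there as an explicit quadratic $4a^2\bigl(m_1^2+\dots+(m_s\pm\tfrac{\delta}{2a})^2+\dots+m_{d-1}^2\bigr)+4b^2$ whose minimum is attained only at $m_1=\dots=m_{d-1}=0$ because $\delta<a$ --- which is exactly your Voronoi-cube argument written out in coordinates. The only cosmetic difference is that the paper treats the vertical step by a separate perpendicularity remark and works out one parity case explicitly, whereas you handle both steps and both parities uniformly via the projection $\pi$.
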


\begin{proof}
We demonstrate the case when $p+j$ is odd, with $p+j=2i-1$; the case when $p+j$ is even is similar.  Then the line segment from $x_{j-1}$ to $x_j$ is the perpendicular from $x_{j-1}$ to the layer $X_{p+j}$; hence $x_j$ is the closest point of $X_{p+j}$ to $x_{j-1}$, and $|x_j-x_{j-1}|=2b$, as claimed. Further, recall that any point $x'\in X_{p+j}$ has the form
$$
x_{j+1}-2be_{d}-\delta u_i+(2a m_1,\dots,2a m_{d-1},0),
$$
where $m_1,\dots,m_{d-1}\in \mathbb Z$ and $u_i={\rm sign}(a_i)e_{|a_i|}$. Set $s:=|a_i|$. Then $x'$ satisfies
\[\begin{array}{lcl}
|x_{j+1}-x'|^2 & = & |(2am_1,\dots,2am_s\pm \delta,\dots,2a m_{d-1},-2b)|^2\\
&=& 4a^2\left(m_1^2+\dots+\left(m_s\pm\frac{\delta}{2a}\right)^2+\dots+m_{d-1}^2\right)+4b^2
\end{array}\]
Since $\delta<a$, this term takes its minimum value precisely when if $m_1=\dots=m_{d-1}=0$, that is, when $x'=x_j$.  For $x'=x_j$ we then obtain
\[ |x_{j+1}-x'|^2 = \delta^{2}+4b^2. \]
Thus $|x_{j+1}-x_j| = \sqrt{\delta^{2}+4b^2}$, as claimed. Hence the lemma follows.
\end{proof}
\smallskip

For an isometry $\varphi$ of $\mathbb{R}^d$ we let $B_\varphi$ denote the matrix of the linear part of $\varphi$, that is, 
\[\varphi(y)=B_{\varphi}y + z_{\varphi}\;\;(y\in\mathbb{R}^d),\] 
for some $z_{\varphi}\in\mathbb{R}^d$.

Our next lemma relates the two sequences of points $(x_j)_{j=-\infty}^{\infty}$ and $(x_j')_{j=-\infty}^{\infty}$ from $X$ which are associated with two point $x$ and $x'$ of $X$, respectively. 

\begin{lemma}\label{2Lclose}
Suppose $0<\delta<a<b$. Let $\varphi$ be an isometry of $\mathbb{R}^d$ that maps point $x\in X_p$ to point $x'\in X_{p'}$ and cluster $C_x(2kR)$ to cluster $C_{x'}(2kR)$ for some integer $k\geq 1$. Then for all $j=-k,\dots,k$,
$$
\varphi(x_j)=
\begin{cases}
x_j',&\text{if  }p-p'\text{ is even},\\
x'_{-j},&\text{if  }p-p'\text{ is odd},
\end{cases}
$$
and
\begin{equation}
\label{XC}
\varphi(X_{p+j}\cap C_x(2kR))=
\begin{cases}X_{p'+j}\cap C_{x'}(2kR),&\text{if  }p-p'\text{ is even},\\
X_{p'-j}\cap C_{x'}(2kR),&\text{if  }p-p'\text{ is odd}.
\end{cases}
\end{equation}
Moreover, $B_{\varphi}e_s=\pm e_{\sigma(s)}$ for $s=1,\ldots,d-1$, for some permutation $\sigma$ of the $\{1,\dots,d-1\}$; and
\begin{equation}
\label{bphi}
B_{\varphi}e_d=
\begin{cases}
e_d,&\text{if  }p-p'\text{ is even},\\
-e_d,&\text{if  }p-p'\text{ is odd}.
\end{cases}
\end{equation}
\end{lemma}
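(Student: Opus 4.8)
The plan is to read the linear part $B_\varphi$ off the local geometry at the centre, then to propagate the point-correspondence along the spine $(x_j)$ using the uniqueness statement in Lemma~\ref{Lclose}, and finally to pin down the parity relation from the two possible edge-lengths along the spine. I expect the real work to be in the last step; everything before it is local rigidity plus bookkeeping.

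\emph{Step 1: the matrix $B_\varphi$ is a signed permutation fixing the vertical direction up to sign.} Since $a<b$, the minimal distance between two points of $X$ equals $2a$, and two points realize it precisely when they are grid-neighbours inside a common layer (any two points in distinct layers are at distance at least $2b>2a$). The centre $x=x_0$ has exactly $2(d-1)$ such neighbours, namely $x\pm 2ae_s$ for $s=1,\dots,d-1$, and all of them lie in $C_x(2kR)$ because $2a\le 2R\le 2kR$. As $\varphi$ maps $C_x(2kR)$ isometrically onto $C_{x'}(2kR)$, it sends these neighbours bijectively onto the $2(d-1)$ neighbours $x'\pm 2ae_t$ of $x'$. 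Hence $B_\varphi(2ae_s)=\pm 2ae_t$, so $B_\varphi e_s=\pm e_{\sigma(s)}$ for some permutation $\sigma$ of $\{1,\dots,d-1\}$ and some signs; since $B_\varphi$ is orthogonal and preserves the horizontal subspace $\langle e_1,\dots,e_{d-1}\rangle$, it must send $e_d$ to $\pm e_d$. Write $B_\varphi e_d=\eta e_d$ with $\eta\in\{+1,-1\}$, to be determined.

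\emph{Step 2: layers map to layers.} Because $B_\varphi e_d=\eta e_d$ and $\varphi(x)=x'$ with $x\in X_p$, $x'\in X_{p'}$, the image of the hyperplane $\{x_d=2b(p+j)\}$ is $\{x_d=2b(p'+\eta j)\}$, so $\varphi$ maps $X_{p+j}\cap C_x(2kR)$ into $X_{p'+\eta j}\cap C_{x'}(2kR)$; applying the same to $\varphi^{-1}$, whose vertical sign is again $\eta$ (as $\eta=\pm1$), shows this is a bijection onto $X_{p'+\eta j}\cap C_{x'}(2kR)$. This will give~(\ref{XC}) once $\eta$ is identified with the parity of $p-p'$.

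\emph{Step 3: tracking the spine.} I would prove $\varphi(x_j)=x'_{\eta j}$ by induction on $\lvert j\rvert$, the case $j=0$ being $\varphi(x_0)=x'=x'_0$. Assuming $\varphi(x_j)=x'_{\eta j}$, Lemma~\ref{Lclose} identifies $x_{j+1}$ as the unique point of $X_{p+j+1}$ closest to $x_j$; since $\varphi$ is an isometry carrying $X_{p+j+1}\cap C_x(2kR)$ onto $X_{p'+\eta(j+1)}\cap C_{x'}(2kR)$ (Step 2) and $x_j$ to $x'_{\eta j}$, the image $\varphi(x_{j+1})$ is the point of $X_{p'+\eta(j+1)}$ closest to $x'_{\eta j}$, which by Lemma~\ref{Lclose} applied to the primed spine is exactly $x'_{\eta(j+1)}$. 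All points involved lie in the relevant clusters because $\lvert x'_m-x'\rvert\le 2kR$ for $\lvert m\rvert\le k$, which one checks from $\lvert x'_m-x'\rvert^2\le 4m^2b^2+m^2\delta^2<4m^2R^2$. The downward direction is identical with $j-1$ in place of $j+1$, completing the induction.

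\emph{Step 4: determining $\eta$, hence~(\ref{bphi}) — the main obstacle.} Evaluating at $x_1$ gives $\varphi(x_1)=x'_\eta$, so $\lvert x'_\eta-x'_0\rvert=\lvert x_1-x_0\rvert$. By Lemma~\ref{Lclose} each side equals $2b$ or $\sqrt{\delta^2+4b^2}$ according as the corresponding edge is ``vertical'' or ``diagonal'', and since $\delta>0$ these two lengths differ; thus the two edges must be of the same type. Reading the type off the rules (X2)--(X3): the edge $x_0x_1$ is diagonal iff $p$ is odd, whereas the edge joining $x'_0$ to $x'_\eta$ is diagonal iff $p'$ is odd (when $\eta=+1$) or $p'$ is even (when $\eta=-1$). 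Matching the types forces $\eta=+1$ exactly when $p\equiv p'\pmod 2$, which is precisely~(\ref{bphi}). Substituting this value of $\eta$ into Steps~2 and~3 yields the stated formula for $\varphi(x_j)$ and for~(\ref{XC}), while $B_\varphi e_s=\pm e_{\sigma(s)}$ was already established in Step~1. The delicate points here are that the two edge lengths genuinely separate the transition types (using $0<\delta<a<b$ through Lemma~\ref{Lclose}) and that the parity bookkeeping of (X2)--(X3) is matched correctly under the reversal caused by $\eta=-1$; the bijectivity and ``stay inside the cluster'' claims of Steps~2 and~3 are routine but must be verified to license the uniqueness deductions.
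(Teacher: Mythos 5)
Your proof is correct and follows essentially the same route as the paper's: in-layer nearest neighbours give the signed permutation on $e_1,\dots,e_{d-1}$ (hence $B_\varphi e_d=\pm e_d$), Lemma~\ref{Lclose} together with a cluster-membership distance estimate propagates the point correspondence along the spine, and the dichotomy $2b$ versus $\sqrt{4b^2+\delta^2}$ tied to layer parity fixes the vertical sign. The only difference is organizational: the paper determines $B_\varphi e_d$ immediately (the closest point to $x$ in the cluster outside $X_p$ is $x_1$ or $x_{-1}$ according to the parity of $p$) and then derives (\ref{XC}) and the spine correspondence, whereas you carry the undetermined sign $\eta$ through Steps 2--3 and identify it only at the end by matching edge types; both arguments rest on the same geometric facts.
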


\begin{proof}
First recall that for any $m\in\mathbb{Z}$ and any point $y\in X_m$, the points $y\pm 2ae_s$, with $s=1,\dots,d-1$, belonging to the same layer $X_m$ as $y$ are the points of $X$ closest to $y$. Hence, since $\varphi$ maps $x\in X_p$ to $x'\in X_{p'}$, this shows that $B_\varphi e_{s}=\pm e_{\sigma(s)}$ for some permutation $\sigma$ of $\{1,\dots,d-1\}$. It then follows that $\varphi(X_p)=X_{p'}$. 

The closest point to $x$ in $C_x(2kR)$ not lying in $X_p$ is $x_1$ if $p$ is even, or $x_{-1}$ if $p$ is odd; note here that $R>b$. This immediately implies equation (\ref{bphi}), and therefore $\varphi(X_{p+j})=X_{p'+j}$ or $\varphi(X_{p+j})=X_{p'-j}$ for each $j=-k,\dots,k$, according as $p-p'$ is even or odd. This establishes the second statement of the lemma, that is, equation~{(\ref{XC})}. But then the first statement of the lemma follows as well, by Lemma~\ref{Lclose}, if we can show that $x_j\in C_x(2kR)$ and $x'_j\in C_{x'}(2kR)$ for $j=-k,\dots,k$. Note here that the required condition of Lemma~\ref{Lclose} that $\delta<a$, is guaranteed to hold here by our assumptions on $\delta$ and $a$. 

Now to accomplish the proof of the first statement, we use the defining properties (X1), (X2) and (X3) for the sequences of points $x_j$ and $x_j'$ to estimate the distances $|x_{j}-x_{0}|$ and $|x_{j}'-x_{0}'|$. For $x_j$, we first rewrite $x_j-x_0$ as a sum and use the triangle inequality to obtain
\[ |x_{j}-x_{0}| \;=\; |\sum_{l=1}^{j}(x_{l}-x_{l-1})| \;\leq\; \sum_{l=1}^{j}|x_{l}-x_{l-1}| .\]
Now Lemma~\ref{Lclose} applies to the summands on the right hand side and shows that
\[\begin{array}{lcl}
|x_j-x_0| & \leqslant &\left\lceil \frac{k}{2}\right\rceil\sqrt{(2b)^2+\delta^2}+\left\lfloor\frac{k}{2}\right\rfloor(2b)\\[.07in]
&<& \left\lceil \frac{k}{2}\right\rceil\sqrt{(2b)^2+(d-1)(2a)^2}+\left\lfloor\frac{k}{2}\right\rfloor\sqrt{(2b)^2+(d-1)(2a)^2}\\[.07in]
&=& 2kR,
\end{array}\]
where $\lfloor\cdot\rfloor$ and $\lceil\cdot\rceil$ denote the floor and ceiling function of a real number respectively. Hence $x_j\in C_x(2kR)$. This establishes the desired property for the points $x_j$. The proof for the points $x_j'$ is similar. This concludes the proof.
\end{proof}
\smallskip

We now investigate symmetries of clusters of Engel sets. Recall that, by definition, the cluster group $S_x(\rho)$ of a $\rho$-cluster $C_x(\rho)$ at a point $x$ of $X$ is the stabilizer of $x$ in the full symmetry group of $C_x(\rho)$. The following theorem describes the cluster groups of $2kR$-clusters of Engel sets for $k\geq 1$. 

Recall that the $d$-{\it crosspolytope\/} (hyperoctahedron) is the $d$-dimensional convex polytope in $\mathbb{R}^d$ with vertices $\pm e_{1},\ldots,\pm e_d$, here viewed as points (see \cite{crp}). When $d=2$ this is a square, and when $d=3$ this is an octahedron. The $d$-crosspolytope is one of the regular solids in $\mathbb{R}^d$. Its full symmetry group is isomorphic to $C_{2}^{d}\ltimes S_d$, the semi-direct product of the elementary abelian group $C_{2}^d$ of order $2^d$, and the symmetric group $S_d$ on $d$ symbols. The subgroup $C_{2}^d$ is generated by the $d$ reflections in the standard coordinate hyperplanes of $\mathbb{R}^d$, and the subgroup $S_d$ consists of all isometries of $\mathbb{R}^d$ that permute the basis vectors $e_{1},\ldots,e_d$. Note that if $L\subseteq \{1,\ldots,d\}$ then $\{\pm e_l\, |\, l\in L\}$ is the vertex set of an $|L|$-dimensional crosspolytope in the $|L|$-dimensional linear subspace spanned by the vectors in $\{e_l\, |\, l\in L\}$. We refer to this polytope as the crosspolytope defined by $\{\pm e_l\, |\, l\in L\}$. For example, if $d=3$ and $L$ has two elements, then this is an equatorial square of the regular octahedron.

\begin{theorem}
\label{Gsym}
Let $d\geq 2$, let $0<\delta<a<b$, let $X:=X(A,a,b,\delta)$, and let $x\in X_p$ for some $p\in \mathbb{Z}$. Further, let $k\geqslant 1$ and $2kR<2b(k+1)$. Then the cluster group $S_x(2kR)$ of the $2kR$-clusters of $X$ is the full symmetry group of the $(d-k-1)$-crosspolytope defined by $\mathcal{E}\setminus \mathcal{E}_{p,k}$ if $1\leqslant k\leqslant d-2$, and is the trivial group if $k\geqslant d-1$. Thus 
\[S_x(2kR)\,\cong\, C_{2}^{d-k-1}\ltimes S_{d-k-1}\;\; (1\leqslant k\leqslant d-2),\]
and $S_x(2kR)=1$ if $k\geqslant d-1$. In particular, this holds when $a^{2}<\frac{2b^2}{k(d-1)}$.
\end{theorem}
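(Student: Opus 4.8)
The plan is to identify $S_x(2kR)$ with the group of signed permutations of the basis vectors in $\mathcal{E}\setminus\mathcal{E}_{p,k}$, establishing containment in both directions. I work under the standing hypotheses $0<\delta<a<b$ and $k\geq 1$, and I first record the role of the inequality $2kR<2b(k+1)$: since the hyperplane carrying $X_{p\pm(k+1)}$ lies at distance $2b(k+1)>2kR$ from $x$, every $2kR$-cluster is confined to the $2k+1$ layers $X_{p-k},\dots,X_{p+k}$. This confinement will be used crucially in the surjectivity step.

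First I would pin down the shape of an arbitrary cluster symmetry $\varphi\in S_x(2kR)$. Since $\varphi$ fixes $x$ and maps $C_x(2kR)$ to itself, Lemma~\ref{2Lclose} applies with $x=x'$ and $p=p'$; as $p-p'=0$ is even it gives $\varphi(x_j)=x_j$ for all $j=-k,\dots,k$, together with $B_\varphi e_d=e_d$ and $B_\varphi e_s=\pm e_{\sigma(s)}$ for some permutation $\sigma$ of $\{1,\dots,d-1\}$. Because $\varphi$ fixes each $x_j$ and $B_\varphi e_d=e_d$, the linear part $B_\varphi$ fixes every horizontal difference $(x_{j+1}-x_j)-2be_d$; by (X2) and (X3) these differences are exactly $0$ or $\delta\,\mathrm{sign}(a_i)e_{|a_i|}$, and as $j$ runs over the $2k$ steps inside the window the nonzero ones run precisely over the basis vectors of $\mathcal{E}_{p,k}$. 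Since $\delta\,\mathrm{sign}(a_i)\neq 0$, this forces $B_\varphi e_l=e_l$ for every $e_l\in\mathcal{E}_{p,k}$. Hence the signed permutation $B_\varphi$ fixes $e_d$ and all of $\mathcal{E}_{p,k}$ pointwise, so it restricts to a signed permutation of the complement $\mathcal{E}\setminus\mathcal{E}_{p,k}$; as $\varphi(y)=B_\varphi(y-x)+x$ is determined by this restriction, I obtain an injective homomorphism from $S_x(2kR)$ into the full symmetry group of the crosspolytope defined by $\mathcal{E}\setminus\mathcal{E}_{p,k}$.

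Next I would prove surjectivity. Given any signed permutation $B$ of $\mathcal{E}\setminus\mathcal{E}_{p,k}$, extend it by the identity on $\mathcal{E}_{p,k}\cup\{e_d\}$ and set $\psi(y):=B(y-x)+x$. For $|j|\leq k$ the horizontal part of $x_j-x$ lies in the span of $\mathcal{E}_{p,k}$ (only within-window transitions contribute), so $B$ fixes $x_j-x$, giving $\psi(x_j+w)=x_j+B(w)$ for $w\in Y$; since any signed coordinate permutation preserves $Y=2a\mathbb{Z}^{d-1}$, $\psi$ maps each layer $X_{p+j}$ with $|j|\leq k$ bijectively onto itself. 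As $\psi$ is an isometry fixing $x$ and the cluster is confined to these layers, $\psi(C_x(2kR))=C_x(2kR)$, so $\psi\in S_x(2kR)$. The group of signed permutations of the $(d-1-k)$-element set $\mathcal{E}\setminus\mathcal{E}_{p,k}$ is $C_2^{d-1-k}\ltimes S_{d-1-k}$, the full symmetry group of the $(d-1-k)$-crosspolytope; when $k\geq d-1$ one has $\mathcal{E}_{p,k}=\mathcal{E}$, the complement is empty, and the group is trivial. Finally, the last clause follows by squaring: $a^2<\frac{2b^2}{k(d-1)}$ yields $4k^2R^2=4k^2b^2+4k^2(d-1)a^2<4k^2b^2+8kb^2=4b^2(k^2+2k)<4b^2(k+1)^2$, that is $2kR<2b(k+1)$.

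I expect the main obstacle to be the surjectivity step, where one must check that the linear-algebraically admissible maps are genuine cluster symmetries. This rests on two points that must be argued carefully: the confinement of the cluster to the window $X_{p-k},\dots,X_{p+k}$, and the fact that within this window only the directions of $\mathcal{E}_{p,k}$ occur in the horizontal shifts, so that permuting the complementary directions leaves every relevant layer setwise fixed. The necessity direction, by contrast, is essentially a bookkeeping consequence of Lemma~\ref{2Lclose} once the horizontal differences $x_{j+1}-x_j$ are read off from (X2) and (X3).
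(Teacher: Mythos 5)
Your proof is correct and follows essentially the same route as the paper's: Lemma~\ref{2Lclose} (applied with $x'=x$, $p'=p$) pins down $B_\varphi$ as a signed permutation fixing $e_d$ and $\mathcal{E}_{p,k}$ pointwise, and conversely signed permutations of $\mathcal{E}\setminus\mathcal{E}_{p,k}$ lift to cluster symmetries because the hypothesis $2kR<2b(k+1)$ confines the cluster to the layers $X_{p-k},\dots,X_{p+k}$. You merely spell out what the paper leaves as ``obvious''---the lifting argument, the role of the confinement hypothesis, and the final inequality $a^2<\frac{2b^2}{k(d-1)}\Rightarrow 2kR<2b(k+1)$---all correctly.
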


\begin{proof}
For the first two statements we may assume without loss of generality that $x=o$, the origin, and hence that $p=0$. Any isometry $\varphi\in S_o(2kR)$ necessarily fixes $o$ and hence is entirely defined by its matrix $B_\varphi$. By Lemma \ref{2Lclose}, applied with $x'=x=o$ and $p=p'=0$, there exists a permutation $\sigma$ of $\{1,\dots,d-1\}$ such that $B_{\varphi}e_s=\pm e_{\sigma(s)}$ for $s=1,\ldots,d-1$, and $Be_d=e_d$. Moreover, again by Lemma \ref{2Lclose}, $Bx_j=x_j$ for $j=-k,\dots,k$ and thus $\varphi(e)=Be=e$ for all vectors $e\in\mathcal{E}_{0,k}$. If follows that $\varphi$ acts trivially on the $(k+1)$-dimensional linear subspace $E_{0,k}$ spanned by the vectors in $\mathcal{E}_{0,k}\cup\{e_d\}$, and that $\varphi$ determines on the orthogonal complement $E_{0,k}^{\perp}$ of $E_{0,k}$ (spanned by the $d-k-1$ basis vectors in $\mathcal{E}\setminus \mathcal{E}_{0,k}$) a symmetry of the $(d-k-1)$-crosspolytope $P_{0,k}$ with vertex set $\{\pm e\,|\, e \in \mathcal{E}\setminus \mathcal{E}_{0,k}\}$ in $E_{0,k}^{\perp}$.

Conversely, any symmetry of the $(d-k-1)$-crosspolytope $P_{0,k}$ in $E_{0,k}^{\perp}$ lifts in an obvious way to an isometry $\varphi$ of $\mathbb{R}^d$ that acts trivially on $E_{0,k}$ and lies in $S_x(2kR)$. Note here that the layers $X_j$ with $|j|\geqslant k+1$ are not involved. This proves the first two statements. 

Finally, note that $a^{2}<\frac{2b^2}{k(d-1)}$ implies $2kR<2b(k+1)$, so that the last statement follows from our previous considerations.
\end{proof}

Theorem~\ref{Gsym} demonstrates nicely how the cluster group of a cluster gets smaller as the cluster grows in size from a radius of $2R$ (for $k=1$), to $2kR$ (for intermediate $k$), to $2(d-2)R$ (for $k=d-2\geq 1$), to $2(d-1)R$ or larger (for $k\geq d-1)$. The corresponding cluster groups of the clusters at these stages are 
\[C_{2}^{d-2}\ltimes S_{d-2},\;\,C_{2}^{d-k-1}\ltimes S_{d-k-1},\;\,C_2,\,\,\mbox{ and }\,\, 1,\]
of orders $2^{d-2}(d-2)!$, $2^{d-k-1}(d-k-1)!$, $2$, and $1$, respectively.
\medskip

Our next theorem characterizes the Engel sets which are regular systems. The upshot is that most Engel sets are not regular systems. The characterization is expressed in terms of the doubly-infinite integer sequence $A$ involved in the construction of Engel sets. 

Recall that condition (A2) for $A$ requires that the terms of $A$ satisfy $|a_{i+d-1}|=|a_i|$ for each $i\in\mathbb{Z}$. Note that there are uncountably many sequences $A$ with the same initial values $a_1,\ldots,a_{d-1}$; in fact, these initial values determine the value of $a_i$ at a position $i\neq 1,\ldots,d-1$ only up to sign, so there are two possible choices at each position. Our next theorem says that among the  uncountable infinity of corresponding Engel sets sharing the same initial values, only two are regular systems.

\begin{theorem}
\label{Xreg}
Let $d\geq 2$ and $0<\delta<a<b$. Then the Engel set $X(A,a,b,\delta)$ is a regular system if and only if the sequence $A$ is such that either $a_{i+d-1}=a_i$ for all $i\in\mathbb Z$ or $a_{i+d-1}=-a_i$ for all $i\in\mathbb Z$.
\end{theorem}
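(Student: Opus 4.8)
The plan is to reduce global regularity to a transitivity statement about \emph{levels}, and then to read off the two admissible sequence conditions from the bookkeeping of the horizontal shifts.

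First I would reduce regularity to transitivity on layers. Since $X$ is invariant under the translations by $Y$, which act transitively on the points of each layer $X_m$, the set $X$ is regular if and only if $S(X)$ acts transitively on the layers, i.e. for every $p$ there is $g\in S(X)$ with $g(X_0)=X_p$. Any $g\in S(X)$ preserves every cluster $C_x(2kR)$, so Lemma~\ref{2Lclose} applies to $g$: its linear part satisfies $B_g e_s=\pm e_{\sigma(s)}$ for $s=1,\dots,d-1$ (some permutation $\sigma$) and $B_g e_d=\pm e_d$. Hence $g$ carries horizontal hyperplanes to horizontal hyperplanes and acts on the level index by an affine map $m\mapsto \tau m+c$ with $\tau\in\{\pm1\}$ and $c\in\mathbb{Z}$; write $\pi:=B_g|_{\mathbb{R}^{d-1}}$ for the induced signed permutation. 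I also record a parity constraint: by Lemma~\ref{Lclose} (taken at $j=0$) the two inter-layer nearest-neighbour distances $2b$ and $\sqrt{\delta^2+4b^2}$ are distinct since $\delta>0$, and the ``vertical'' neighbour (distance $2b$) of a point lies above it exactly when its level is even. As $g$ preserves this distance type, $\tau=+1$ forces $c$ even and $\tau=-1$ forces $c$ odd.

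Next I would set up the offset bookkeeping. Let $h_m$ be the horizontal offset of $X_m$ relative to $X_0$, so that its increments $\iota_m:=h_m-h_{m-1}$ equal $0$ at odd levels and $\delta u_i$ at an even level $2i$. A symmetry with level action $m\mapsto \tau m+c$ and horizontal translation part $z'$ must satisfy $\pi h_m+z'\equiv h_{\tau m+c}\pmod Y$ for all $m$; taking successive differences and using $0<\delta<a$ (so that $\delta u_i$ is determined mod $Y=2a\mathbb{Z}^{d-1}$ by the signed basis vector $\pm e_{|a_i|}$), this collapses to the single requirement $\pi u_i=u_{i+s}$ (when $\tau=+1$, $c=2s$) or $\pi u_i=-u_{s+1-i}$ (when $\tau=-1$, $c=2s+1$), and conversely such a $\pi$ lets one solve for $z'$ and build the corresponding $g\in S(X)$. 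Writing $u_i=\sign(a_i)\,e_{|a_i|}$, the underlying permutation demanded of $\pi$ exists automatically by (A2)–(A3), so the genuine obstruction is purely the sign pattern: $\pi u_i=u_{i+s}$ is solvable precisely when $\sign(a_i)\sign(a_{i+s})$ depends only on $i \bmod (d-1)$.

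For the ``only if'' direction, regularity provides $g\in S(X)$ with $g(X_0)=X_2$; since $c=2$ is even the parity constraint forces $\tau=+1$, so $g$ shifts levels by $2$ and its signed permutation obeys $\pi u_i=u_{i+1}$. By the last step this means $\sign(a_i)\sign(a_{i+1})=\sign(a_{i+d-1})\sign(a_{i+d})$ for all $i$, which upon setting $\beta_i:=\sign(a_i)\sign(a_{i+d-1})$ rearranges to $\beta_i=\beta_{i+1}$; hence $\beta_i$ is a constant $\beta\in\{\pm1\}$, and with $|a_{i+d-1}|=|a_i|$ from (A2) this yields $a_{i+d-1}=a_i$ (if $\beta=+1$) or $a_{i+d-1}=-a_i$ (if $\beta=-1$) for all $i$. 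For the ``if'' direction, when $a_{i+d-1}=\pm a_i$ (equivalently $\beta_i$ constant) the sign-consistency requirements hold for both $\pi u_i=u_{i+1}$ and $\pi u_i=-u_{1-i}$, so I construct $g_1\in S(X)$ shifting levels by $2$ and a reflection-type $g_0\in S(X)$ acting by $m\mapsto 1-m$ (sending $X_0$ to $X_1$); the group they generate acts on the levels as the infinite dihedral group with translation subgroup $2\mathbb{Z}$ and a reflection through a half-integer, hence transitively on $\mathbb{Z}$, so $X$ is regular. I expect the main obstacle to be the offset bookkeeping of the second step — pinning down, carefully and modulo $Y$, exactly when the horizontal shifts forced by a candidate signed permutation match those of $X$ — because that is the precise place where the constant-$\beta$ condition (equivalently $a_{i+d-1}=\pm a_i$) is simultaneously forced in one direction and exploited in the other.
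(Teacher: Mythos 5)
Your proposal is correct and follows essentially the same route as the paper: regularity is reduced to transitivity on layers via Lemma~\ref{2Lclose}, the ``only if'' direction extracts a symmetry carrying $X_0$ to $X_2$ whose linear part must satisfy $\pi u_i=u_{i+1}$ for all $i$ (forcing the constant sign $\tau$), and the ``if'' direction builds exactly the same two generators as the paper --- a level-shift-by-$2$ symmetry and a level-reflection $m\mapsto 1-m$ symmetry. Your mod-$Y$ offset bookkeeping and sign-consistency criterion are just a systematic repackaging of the induction the paper uses to extend $\varphi'(u_j)=u_{j+1}$ (resp.\ $\varphi'(u_j)=-u_{1-j}$) from the initial indices to all of $\mathbb{Z}$, so the two arguments coincide in substance.
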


\begin{proof}
First suppose that $X:=X(A,a,b,\delta)$ is a regular system. Then the symmetry group $S(X)$ of $X$ acts transitively on the points of $X$. Choose any point $x\in X$ and consider the sequence of points $x_j$ associated with $x$ as in (X1), (X2) and X(3). Then, since $S(X)$ acts transitively on $X$, there exists an isometry $\varphi\in S(X)$ which maps $x=x_{0}$ to $x':=x_2$ and thus the cluster $C_{x}(2kR)$ to the cluster $C_{x'}(2kR)$ for each $k\geq 1$. Then by Lemma \ref{2Lclose}, $\varphi$~maps $x_{2i}$ to $x_{2i+2}$, and $u_i={\rm sign}(a_i)e_{|a_i|}$ to $u_{i+1}={\rm sign}(a_{i+1})e_{|a_{i+1}|}$, for all $i\in \mathbb{Z}$. Therefore if $\tau=\pm 1$ is defined by the equation $a_d=\tau a_1$ (that is, $\tau=a_{d}/a_{1}$), then also $u_d=\tau u_1$. Hence, $u_{i+d-1}=\tau u_i$ and $a_{i+d-1}=\tau a_i$ for each $i\in\mathbb{Z}$. This establishes one direction of the theorem.

Conversely, suppose the stated condition on $A$ holds. Then $a_{i+d-1}=\tau a_i$ for all $i\in\mathbb Z$, where either $\tau=1$ or $\tau=-1$. Note that then also $u_{i+d-1}=\tau u_i$ for all $i\in\mathbb Z$. We wish to show that this implies that $S(X)$ acts transitively on $X$. Since $X$ is invariant under shifts by vectors from vectors in the $(d-1)$-lattice $Y$ involved in the construction, it suffices to show that we can move the layer $X_0$ of $X$ to any other layer of $X$ by a symmetry $\varphi$ of $X$.

We first explain how layer $X_0$ can be moved to layer $X_2$ and then similarly to any layer $X_{2i}$ for $i\in\mathbb{Z}$. Consider the isometry $\varphi$ obtained as the composition of a shift by the vector~$4be_d$, followed by the isometry $\varphi'$ of $\mathbb{R}^d$ determined by the conditions $\varphi'(e_d)=e_d$ and $\varphi'(u_j)=u_{j+1}$ for $j=1,\ldots,d-1$. We claim that then $\varphi'(u_j)=u_{j+1}$ for all $j\in \mathbb{Z}$. Since $u_{i+d-1}=\tau u_i$ for all $i\in\mathbb Z$, we see that the desired property holds for $j=d$; in fact, $u_d=\tau u_1$ is mapped by $\varphi'$ to $\tau u_2=u_{d+1}$. Now assume inductively that $\varphi(u_j)=u_{j+1}$ holds for a range of consecutive subscripts, $j=s,s+1,\dots, t$ (say) with $|t-s|\geq d$. Then $\varphi'$ takes point $u_{t+1}=\tau u_{t+2-d}$ to point $\tau u_{t+2-d+1}=u_{t+2}$ and similarly point $u_{s-1}=\tau u_{s-1+d-1}$ to point $\tau u_{s-1+d}=u_s$. Therefore, $u_j$ is mapped to $u_{j+1}$ for all $j\in\mathbb{Z}$. It follows that the isometry $\varphi$ is indeed a symmetry of~$X$, that is, $\varphi\in S(X)$.

Next we describe a symmetry of $X$ that moves $X_0$ to $X_1$. We again compose suitable isometries. As in the proof of Lemma \ref{2dm1Lemma}, we first apply the reflection in the hyperplane $x_{d}=b$, in order to map layer $X_m$ of $X$ to layer $X_m'$ of a new layered set $X'$ for each $m\in\mathbb{Z}$, with $X_{1}'=X_0$. After that, we employ the isometry $\varphi'$ of $\mathbb{R}^d$ determined by the conditions $\varphi'(e_d)=e_d$ and $\varphi'(u_j)=-u_{1-j}$ for $j=1,\ldots,d-1$. We claim that then $\varphi'(u_j)=-u_{1-j}$ for each $j\in \mathbb{Z}$. This holds for $j=d$, since 
\[\varphi'(u_d)=\varphi'(\tau u_1)=-\tau u_{0}=-u_{1-d}.\] 
Now assume inductively that $\varphi(u_j)=-u_{1-j}$ holds for a range of consecutive subscripts, $j=s,s+1,\dots, t$ (say) with $|t-s|\geq d$. Then the isometry $\varphi'$ takes point $u_{t+1}=\tau u_{t+2-d}$ to point
\[-\tau u_{d-t-1}=-u_{1-(t+1)},\] 
and point $u_{s-1}=\tau u_{s-1+d-1}$ to point 
\[-\tau u_{3-s-d}=-u_{2-s}=-u_{1-(s-1)}.\] 
Thus, $\varphi'$ maps $u_j$ to $-u_{1-j}$ for all $j$. It follows that the composed isometries give a symmetry of $X$ which takes $X_0$ to $X_1$. This completes the proof.

\end{proof}

The previous theorem allows us to conclude that there are only few regular systems among Engel sets. More precisely, we have the following corollary.

\begin{corollary}\label{cor:regularity}
Let $d\geq 2$ and $0<\delta<a<b$. Then among the uncountably many Engel sets $X(A,a,b,\delta)$ obtained for sequences $A$ with the same initial values $a_1,\ldots,a_{d-1}$ there are up to isometry exactly two (different) regular systems, namely those obtained for the two sequences $A$ with $a_{i+d-1}=a_i$ for all $i\in\mathbb Z$ or $a_{i+d-1}=-a_i$ for all $i\in\mathbb Z$ (that is, those corresponding to $\tau=1$ and $\tau=-1$, respectively).
\end{corollary}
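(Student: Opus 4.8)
The plan is to derive the corollary from Theorem~\ref{Xreg} in two stages: first count the regular sequences that share the prescribed initial values, and then prove that the two resulting Engel sets belong to distinct isometry classes.

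I would begin by fixing $a_1,\dots,a_{d-1}$ and recording which sequences $A$ extend them. By (A2) the absolute value $|a_i|$ is forced for every $i$, whereas the sign of $a_i$ is prescribed only for $i\in\{1,\dots,d-1\}$ and is free at each of the infinitely many remaining positions; this already accounts for the uncountably many Engel sets in the statement. Theorem~\ref{Xreg} tells us that $X(A,a,b,\delta)$ is regular precisely when $a_{i+d-1}=\tau a_i$ for all $i$, for some $\tau\in\{1,-1\}$. For each choice of $\tau$ this relation propagates the $d-1$ initial values uniquely to all indices, so there are exactly two admissible sequences $A^{+}$ (with $\tau=1$) and $A^{-}$ (with $\tau=-1$); they differ because $a_d=a_1$ for $A^{+}$ but $a_d=-a_1$ for $A^{-}$. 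Since every set isometric to a regular system is again regular, no sequence outside $\{A^{+},A^{-}\}$ can add a further isometry class, so it remains only to check that $X(A^{+})$ and $X(A^{-})$ are not isometric.

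For the non-isometry I would show that $\tau$ is an isometry invariant of Engel sets with common parameters $a,b,\delta$. Suppose $\varphi$ is an isometry with $\varphi\bigl(X(A)\bigr)=X(\tilde A)$. Exactly as in the opening of the proof of Lemma~\ref{2Lclose}, the minimal distance between points is $2a$ (here $b>a$ enters) and the nearest neighbours of any point are its $2(d-1)$ in-layer grid neighbours; hence $\varphi$ preserves the horizontal grid directions and the horizontal hyperplanes, so $B_\varphi e_s=\pm e_{\sigma(s)}$ for a permutation $\sigma$ of $\{1,\dots,d-1\}$ and $B_\varphi e_d=\eta e_d$ with $\eta=\pm1$. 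Writing $\pi:=B_\varphi|_{\mathbb{R}^{d-1}}$, the isometry carries consecutive layers to consecutive layers and thus each nonzero horizontal offset $\delta u_i$ of $X(A)$ (from rule (E4)) to a nonzero horizontal offset of $X(\tilde A)$, giving $\pi(u_i)=\tilde u_{\eta i+c}$ for all $i$ and some integer $c$.

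The invariance of $\tau$ then follows by a short computation. Because $\tau=\pm1$, we have $u_{i+\eta(d-1)}=\tau_A u_i$ for either sign of $\eta$; applying the linear map $\pi$ and substituting $j=\eta i+c$ yields $\tilde u_{j+(d-1)}=\tau_A\,\tilde u_j$ for all $j$, that is, $\tau_{\tilde A}=\tau_A$. Consequently an isometry from $X(A^{+})$ onto $X(A^{-})$ would force $1=-1$, so the two regular Engel sets are non-isometric, and up to isometry there are exactly two regular systems among the given family. I expect the middle step to be the main obstacle: one must argue carefully that an isometry between two distinct Engel sets is constrained to respect the layer-and-grid structure, and pin down how it reindexes the offset sequence (including the possible orientation reversal recorded by $\eta$); granting this, the final $\tau$-invariance identity is routine.
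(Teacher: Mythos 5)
Your proposal is correct and follows essentially the same route as the paper: Theorem~\ref{Xreg} caps the count at two, and the non-congruence of the $\tau=1$ and $\tau=-1$ sets is established, as in the paper, by a Lemma~\ref{2Lclose}-style argument that any isometry between Engel sets with these parameters must respect the layer-and-grid structure and carry offset vectors to offset vectors, making $\tau$ an isometry invariant. One small imprecision: in the orientation-reversing case $\eta=-1$ the correspondence is actually $\pi(u_i)=-\tilde u_{\eta i+c}$ (a uniform sign flip, visible in the paper's proof of Theorem~\ref{Xreg} where the vertical reflection induces $u_j\mapsto -u_{1-j}$), but since this sign is constant in $i$ it cancels on both sides of the propagated identity, so your $\tau$-invariance computation survives; if anything, your explicit treatment of the reindexing via $\eta$ and $c$ is more careful than the paper's own proof, which normalizes $\varphi$ to fix the origin and asserts $\varphi(u_i)=u_i'$ without addressing a possible vertical flip.
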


\begin{proof}
Theorem~\ref{Xreg} implies that there can be at most two regular systems of the kind described. It remains to show that the regular systems corresponding to $\tau=1$ and $\tau=-1$ cannot be congruent.

Suppose the regular systems $X(A,a,b,\delta)$ and $X(A',a,b,\delta)$ obtained for two sequences $A$ and $A'$ (with the same initial values $a_1,\ldots,a_{d-1}$) are congruent under an isometry $\varphi$ of~$\mathbb{R}^d$. Without loss of generality we may assume that $\varphi$ fixes the origin and thus is a linear isometry. Let $u_i$ and $u_{i}'$ denote the corresponding vectors as defined in (E4). Then a similar argument as in the proof of Lemma~\ref{2Lclose} shows that we must have $\varphi(u_i)=u_i'$ for all $i\in\mathbb{Z}$. Hence, if $\kappa=\pm 1$ is defined by the equation $u_d=\kappa u_1$ (that is, $\kappa=u_{d}/u_1$), then we must also have $u_d'=\kappa  u_1'$.

It follows immediately that the two regular systems corresponding to $\tau=1$ and $\tau=-1$ cannot be congruent.
\end{proof}

Combining the previous corollary with Theorem~\ref{thm:onecluster} we finally obtain the main result of this paper.

\begin{theorem}
\label{thm:2dR} 
Suppose $d\geq 2$ and $R$ is a fixed positive number. For any $\varepsilon$, with $0<\varepsilon<2dR$, there exists a non-regular Delone set $X$ of type $(r,R)$ in $d$-space such that $N_X(2dR-\varepsilon)=1$.
\end{theorem}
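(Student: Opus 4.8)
The plan is to realize the required Delone set as an Engel set $X=X(A,a,b,\delta)$ whose four parameters are tuned so that Theorem~\ref{thm:onecluster} forces $N_X(2dR-\varepsilon)=1$, while Corollary~\ref{cor:regularity} guarantees that $X$ is not a regular system. Once such a choice is available, the theorem follows by simply quoting these two earlier results; so the entire content of the argument is to show that, with $R$ held fixed, the parameters $a,b,\delta$ can be chosen to satisfy the hypotheses of both results at once.

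First I would treat $a$ as a free small parameter and then force $X$ to have the prescribed second type-parameter $R$. Since Proposition~\ref{delengel} gives the type $\bigl(a,\sqrt{b^2+(d-1)a^2}\bigr)$, I set
\[ b:=\sqrt{R^2-(d-1)a^2}, \]
which is well defined and positive provided $a<R/\sqrt{d-1}$. With this choice $\sqrt{b^2+(d-1)a^2}=R$ identically, so $X$ automatically has type $(r,R)$ with $r=a$ and the prescribed $R$ is matched exactly. It then remains to secure the inequalities $0<\delta<a<b$ and $a^2\leqslant\frac{\varepsilon b}{d(d-1)}$ that feed into Theorem~\ref{thm:onecluster} and Corollary~\ref{cor:regularity}.

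The key observation is that all of these hold once $a$ is small enough. Indeed, $b>a$ is equivalent to $R^2>da^2$, that is, $a<R/\sqrt d$; and as $a\to 0^+$ one has $b\to R$, so the right-hand side of $a^2\leqslant\frac{\varepsilon b}{d(d-1)}$ tends to $\frac{\varepsilon R}{d(d-1)}>0$ while the left-hand side tends to $0$. Hence there is a genuine interval of admissible $a$ (here the hypothesis $\varepsilon<2dR$ only enters to make the cluster radius $2dR-\varepsilon$ positive). I would fix any admissible $a$, take the corresponding $b$, and set $\delta:=a/2$ to get $0<\delta<a<b$. By Theorem~\ref{thm:onecluster} this already yields $N_X(2dR-\varepsilon)=1$.

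Finally I would choose the sequence $A$. Fix admissible initial terms $a_1,\dots,a_{d-1}$ (so that $|a_1|,\dots,|a_{d-1}|$ is a permutation of $1,\dots,d-1$), and recall from Corollary~\ref{cor:regularity} that among the uncountably many sequences $A$ extending these terms only the two satisfying $a_{i+d-1}=a_i$ for all $i$, or $a_{i+d-1}=-a_i$ for all $i$, give regular systems. Taking $A$ to be any of the remaining sequences makes $X(A,a,b,\delta)$ non-regular, and this Engel set is exactly the Delone set demanded by the theorem. The only real obstacle is the simultaneous feasibility of the parameter inequalities under the constraint that $R$ is fixed, and this is precisely what the $a\to 0^+$ limiting argument resolves; everything else is a direct appeal to the already-established Theorem~\ref{thm:onecluster} and Corollary~\ref{cor:regularity}.
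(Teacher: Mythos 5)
Your proposal is correct and follows essentially the same route as the paper: it imposes exactly the paper's conditions $a<b$, $R=\sqrt{b^2+(d-1)a^2}$, $a^2\leqslant\frac{\varepsilon b}{d(d-1)}$, $0<\delta<a$, and then invokes Theorem~\ref{thm:onecluster} for $N_X(2dR-\varepsilon)=1$ and Corollary~\ref{cor:regularity} to pick a non-regular sequence $A$. In fact your write-up is slightly more complete than the paper's, since the limiting argument $a\to 0^+$ with $b:=\sqrt{R^2-(d-1)a^2}$ explicitly verifies that these constraints are simultaneously satisfiable for the prescribed $R$, a feasibility point the paper leaves implicit.
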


\begin{proof}
Choose a pair of parameters $a,b$ such that 
\begin{equation}
\label{crucial}
a<b,\;\, R=\sqrt{b^2+(d-1)a^2},\;\, \mbox{and}\;\, a^2<\frac{\varepsilon b}{d(d-1)},
\end{equation} 
and choose the initial part of the doubly-infinite sequence $A$ as $a_1,\ldots,a_{d-1}$ (but allow the signs of the other terms $a_i$ of $A$ to vary). We also choose a number $\delta$ such that $0<\delta<a$. Then Theorem~\ref{thm:onecluster} implies that any Engel set $X$ from the infinite family of Engels sets $X(A,a,b,\delta)$ satisfies the condition $N_X(2dR-\varepsilon)=1$. However, according to Corollary~\ref{cor:regularity}, only two of these sets will be regular sets.
\end{proof}

Now we can also establish the new lower bound for the regularity radius of Delone sets. Recall that, given the parameters $r$ and $R$, the regularity radius $\hat{\rho}_d$ is the smallest positive number $\rho$ with the property that each Delone set $X$ of type $(r,R)$ in $\mathbb{R}^d$ with mutually equivalent $\rho$-clusters is a regular point system. Then the previous theorem immediately implies the following lower bound for $\hat{\rho}_d$ (when $d\geq 2$), which is linear in $d$. But the bound is also valid for $d=1$.

\begin{theorem}\label{cor:lowerbound}
For $d\geq 1$ we have $\hat{\rho}_d\geq 2dR$.
\end{theorem}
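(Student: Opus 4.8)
The plan is to derive the final statement, $\hat{\rho}_d\geq 2dR$, directly from Theorem~\ref{thm:2dR} together with the definition of the regularity radius and the special treatment of the case $d=1$. The key observation is that the regularity radius $\hat{\rho}_d$ is, by definition, the \emph{smallest} positive number $\rho$ such that mutual equivalence of all $\rho$-clusters forces regularity; equivalently, for any $\rho$ strictly below $\hat{\rho}_d$ there must exist a non-regular Delone set of type $(r,R)$ whose $\rho$-clusters are all equivalent. Thus to establish the lower bound it suffices to exhibit, for every radius $\rho<2dR$, a non-regular Delone set with $N_X(\rho)=1$; the existence of such a set shows $\hat{\rho}_d$ cannot be smaller than $2dR$.

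First I would handle the main range $d\geq 2$. Given any $\rho$ with $0<\rho<2dR$, I would set $\varepsilon:=2dR-\rho$, so that $0<\varepsilon<2dR$ and $\rho=2dR-\varepsilon$. Theorem~\ref{thm:2dR} then directly produces a non-regular Delone set $X$ of type $(r,R)$ in $\mathbb{R}^d$ with $N_X(2dR-\varepsilon)=N_X(\rho)=1$. Since a non-regular set with mutually equivalent $\rho$-clusters exists for every such $\rho$, no value of $\rho$ below $2dR$ can serve as a regularity radius, and therefore $\hat{\rho}_d\geq 2dR$. This is the substantive content and follows immediately once Theorem~\ref{thm:2dR} is in hand.

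Second I would dispose of the boundary case $d=1$, which Theorem~\ref{thm:2dR} does not cover (Engel sets require $d\geq 2$). Here the bound reads $\hat{\rho}_1\geq 2R$, and this was already established in Section~\ref{bano}: the explicit one-dimensional construction there produces, for each $\rho<2R$, a non-regular Delone set of type $(r,R)$ on the line with mutually equivalent $\rho$-clusters, yielding exactly $\hat{\rho}_1=2R$, hence in particular $\hat{\rho}_1\geq 2R=2dR$ for $d=1$. So the claimed inequality holds in all dimensions $d\geq 1$.

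I do not expect any genuine obstacle here: the theorem is essentially a repackaging of Theorem~\ref{thm:2dR} through the definition of $\hat{\rho}_d$. The only point requiring a moment of care is the logical direction—translating ``a non-regular example exists for every $\rho<2dR$'' into the inequality $\hat{\rho}_d\geq 2dR$—and the clean bookkeeping of the substitution $\varepsilon=2dR-\rho$ so that the range $0<\varepsilon<2dR$ matches the hypothesis of Theorem~\ref{thm:2dR}. The $d=1$ case is the only thing needing separate mention, and it is already proved earlier, so it costs only a sentence.
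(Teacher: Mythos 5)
Your proposal is correct and follows essentially the same route as the paper's own proof: the case $d\geq 2$ is handled by the substitution $\varepsilon:=2dR-\rho$ and a direct application of Theorem~\ref{thm:2dR}, while the case $d=1$ is referred back to the explicit one-dimensional construction at the end of Section~\ref{bano}. There is nothing to add or correct.
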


\begin{proof}
As we saw at the end of Section~\ref{bano}, the inequality holds when $d=1$. Now suppose $d\geq 2$ and $0<\rho<2dR$. Set $\varepsilon:=2dR-\rho$ and apply Theorem~\ref{thm:2dR}. The Delone set $X$ of Theorem~\ref{thm:2dR} then satisfies $N_X(\rho)=1$ but is not a regular system. This shows that no positive number $\rho$ less than $2dR$ has the property that each Delone set of type $(r,R)$ with mutually equivalent $\rho$-clusters is a regular point system. Thus $\hat{\rho}_d\geq 2dR$.
\end{proof}

\begin{remark}
The Engel sets from the proofs of Theorems \ref{thm:2dR} and \ref{cor:lowerbound} are characterized by $b$ much larger than $a$, hence they are very elongated in one direction. From a crystallographic point of view, this would imply unphysically large distance between consecutive layers compared to distances between points within one layer. On the other hand, it is possible to introduce more realistic Engel sets with $a$ close to $b$ and a small $\delta$. These sets can be obtained by a slight deformation of a regular cubic lattice $2a\mathbb Z^d$.  For such a Delone set $R$ is close to $\sqrt{d}a$ and (due to Lemma \ref{2dm1Lemma}) the $\rho$-clusters with $\rho$ close to $2da$ are congruent. By taking  an appropriate sequence $A$ we obtain a non-regular Delone set $X(A,a,b,\delta)$. Thus, for such deformations of a regular lattice the congruence of $(2\sqrt{d}R-\varepsilon)$-clusters is not enough to ensure regularity.
\end{remark}

Our last theorem characterizes the regularity of an Engel set in terms of its cluster counting function. In particular it says that, for Engel sets, the radius $2dR-\varepsilon$ in Theorem~\ref{thm:2dR} cannot be replaced by $2dR$ (or any larger number). 

\begin{theorem}
\label{enreg}
Let $d\geq 2$ and $0<\delta<a<b$.  Then an Engel set $X:=X(A,a,b,\delta)$ is a regular system if and only if $N_{X}(2dR)=1$.
\end{theorem}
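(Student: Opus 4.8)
The plan is to prove Theorem~\ref{enreg} by establishing both implications, where the forward direction is essentially trivial and the reverse direction requires showing that $N_X(2dR)=1$ forces the sequence $A$ into one of the two regular forms identified in Theorem~\ref{Xreg}. First I would dispose of the easy direction: if $X$ is a regular system, then its symmetry group $S(X)$ acts transitively on the points of $X$, so any two $\rho$-clusters (for any $\rho$, in particular $\rho=2dR$) are equivalent under an element of $S(X)$. Hence $N_X(2dR)=1$ immediately.

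For the converse, I would assume $N_X(2dR)=1$ and aim to show that $A$ satisfies the condition of Theorem~\ref{Xreg}, namely $a_{i+d-1}=\tau a_i$ for all $i$ with a fixed $\tau\in\{+1,-1\}$, which then yields regularity. The key is to exploit the structure of a $2dR$-cluster. Observe that a cluster of radius $2dR$ is large enough to reach $d$ layers above and below the center (since $2dR$ exceeds the inter-layer distance times the appropriate count), so it encodes a full period of $d-1$ consecutive horizontal shift-vectors on each side, together with the relative orientations of the shifts at the ``turning point'' where one transitions from one period to the next. The plan is to fix a center point $x\in X_p$ and compare its $2dR$-cluster with the $2dR$-cluster centered at $x_2$ (the point two layers up, in the sense of (X1)--(X3)), using the hypothesis $N_X(2dR)=1$ to produce an isometry $\varphi$ mapping one cluster to the other.

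I would then apply Lemma~\ref{2Lclose} with $k=d$ to this $\varphi$, which is the crucial leverage point. Since $p-p'=p-(p+2)$ is even, Lemma~\ref{2Lclose} tells us that $\varphi(x_j)=x_{j+2}$ for the relevant range of $j$, that $B_\varphi e_d=e_d$, and that $B_\varphi$ permutes the horizontal basis vectors up to sign. Because $\varphi$ must send the horizontal shift vector $u_i=\mathrm{sign}(a_i)e_{|a_i|}$ (realized as $x_{2i}-x_{2i-1}$ appropriately) to $u_{i+1}$, and because the full period $\mathcal{E}_{p,d-1}=\mathcal{E}$ is visible inside the cluster, the linear map $B_\varphi$ is forced to carry the entire period of shift directions forward by one step consistently. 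Tracking this around one full period and comparing $u_d$ with $u_1$ defines the sign $\tau$ via $a_d=\tau a_1$; the rigidity of $B_\varphi$ (a signed permutation fixing $e_d$) then forces $a_{i+d-1}=\tau a_i$ for every $i$, exactly the condition of Theorem~\ref{Xreg}. Invoking Theorem~\ref{Xreg} concludes that $X$ is regular.

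The main obstacle I anticipate is verifying carefully that a $2dR$-cluster genuinely ``sees'' the information needed to pin down $\tau$ uniformly, that is, that the cluster extends far enough (to layers $X_{p\pm d}$) so that a complete period of $d-1$ shift vectors \emph{plus} the comparison vector at the period boundary both lie inside $C_x(2dR)$. This is where the radius $2dR$ (rather than something smaller) is essential, and it dovetails with the remark that $2dR-\varepsilon$ in Theorem~\ref{thm:2dR} cannot be improved to $2dR$: at radius exactly $2dR$ the cluster becomes large enough to detect the non-periodicity of the signs, so non-regular Engel sets must have $N_X(2dR)>1$. Establishing the precise distance bound — that $x_{\pm d}\in C_x(2dR)$ while controlling which layers are reached — will require the floor/ceiling distance estimates from Lemma~\ref{2Lclose}'s proof, applied now at the threshold $k=d$ rather than with strict inequality, and this boundary case is the delicate computational point to get right.
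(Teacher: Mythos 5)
Your strategy is essentially the paper's own: forward direction trivial; for the converse, apply Lemma~\ref{2Lclose} with $k=d$ to an isometry realizing cluster equivalence, deduce that its linear part is a signed permutation fixing $e_d$ which advances the visible shift vectors $u_l$ by one step, define $\tau$ from one period, and invoke Theorem~\ref{Xreg}. However, there is a genuine gap in how you propagate the relation to all of $\mathbb{Z}$: you invoke the hypothesis $N_X(2dR)=1$ only \emph{once}, for the single pair of clusters $C_x(2dR)$ and $C_{x_2}(2dR)$. That single isometry $\varphi$ gives $B_\varphi(u_l)=u_{l+1}$ only for the $d$ consecutive indices $l=l_0,\dots,l_0+d-1$ whose shift vectors actually occur among the layers met by $C_x(2dR)$ (going from $X_{p-d}$ to $X_{p+d}$ uses rule (E4) exactly $d$ times). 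Combined with the (A2)-relation $u_{l_0+d-1}=\tau u_{l_0}$, this yields just one new identity, $u_{l_0+d}=\tau u_{l_0+1}$. Nothing constrains $a_i$ for $i$ outside this finite window: those signs are free parameters of $A$, and the ``rigidity of $B_\varphi$'' cannot reach them, because $\varphi$ is only known to map one cluster to the other --- it is not a symmetry of $X$, so you cannot iterate it. Hence the conclusion ``$a_{i+d-1}=\tau a_i$ for every $i$'' does not follow from what you have established; a sequence $A$ violating periodicity only far from the window is perfectly compatible with your single comparison.

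The fix is exactly what the paper does: use $N_X(2dR)=1$ infinitely often. For \emph{every} $i\in\mathbb{Z}$, compare $C_x(2dR)$ with $C_{x_{2i}}(2dR)$; Lemma~\ref{2Lclose} (case $p-p'$ even) gives an isometry $\varphi_i$ with $B_{\varphi_i}(u_l)=u_{i+l}$ for $l=l_0,\dots,l_0+d-1$, and applying $B_{\varphi_i}$ to $u_{l_0+d-1}=\tau u_{l_0}$ yields $u_{i+l_0+d-1}=\tau u_{i+l_0}$ with the \emph{same} $\tau$ for every $i$; letting $i$ run over $\mathbb{Z}$ gives precisely the hypothesis of Theorem~\ref{Xreg}. (Alternatively, compare consecutive pairs $C_{x_{2i}}(2dR)$ and $C_{x_{2i+2}}(2dR)$ and note that the overlapping windows force all the resulting signs to coincide.) Incidentally, the ``delicate boundary case'' you worry about at $k=d$ is a non-issue: Lemma~\ref{2Lclose} is stated and proved for every $k\geq 1$, and its distance estimate
$|x_j-x_0|\leq \left\lceil \tfrac{k}{2}\right\rceil\sqrt{4b^2+\delta^2}+\left\lfloor \tfrac{k}{2}\right\rfloor 2b < 2kR$
is strict, so $x_{\pm d}\in C_x(2dR)$ with no threshold subtlety at all.
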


\begin{proof}
Clearly, if $X$ is a regular system, then $N_{X}(\rho)=1$ for each $\rho>0$ and thus $N_{X}(2dR)=1$. Conversely, suppose $N_{X}(2dR)=1$. Take $x:=o$ and $x':=x_{2i}$ for some $i\in\mathbb{Z}$. Then, by Lemma \ref{2Lclose} with $k=d$, the linear part of the isometry that realizes the equivalence of the clusters $C_x(2dR)$ and $C_{x'}(2dR)$ must map $u_l$ to $u_{i+l}$ for each of the $d$ values $l_0,\dots,l_0+d-1$ (say) of $l$ arising in the construction of the layers $X_{-d},\dots,X_d$ of $X$. If $\tau=\pm 1$ is defined by $u_{l_0+d-1}=\tau u_{l_0}$ (that is, $\tau:=u_{l_0+d-1}/u_{l_0}$), then necessarily $u_{i+d-1}=\tau u_i$ for all $i\in\mathbb{Z}$. Hence $X(A,a,b,\delta)$ is a regular system.
{\sloppy

}
\end{proof}

\section{Engel sets in crystallography and crystal chemistry}

The example of 3-dimensional Engel sets with identical $4R$ but different $6R$ environments is shown in Figure \ref{engel-fig4}. The first set (Figure \ref{engel-fig4}a) is a crystallographic orbit in the space group $P4_122$ with the approximate unit-cell parameters $a = 3, c = 25$ \AA, and point coordinates (0.200; 0.200; 0.420). The second set (Figure \ref{engel-fig4}b) is an orbit of the point (0.029; 0.080; 0.340) in the space group $C2/c$ with the unit-cell parameters $a = b = 4.245, c = 12.6$ \AA, $\beta$ = $96^\circ$. Both orbits are Delone sets with $r \approx 1.125$ and $R \approx 2.795$ \AA. They are also Engel sets as defined above: they can be considered as consisting of double layers of 2-dimensional square lattice (shown by black lines).

\begin{figure}
\begin{center}
\includegraphics[width=\textwidth]{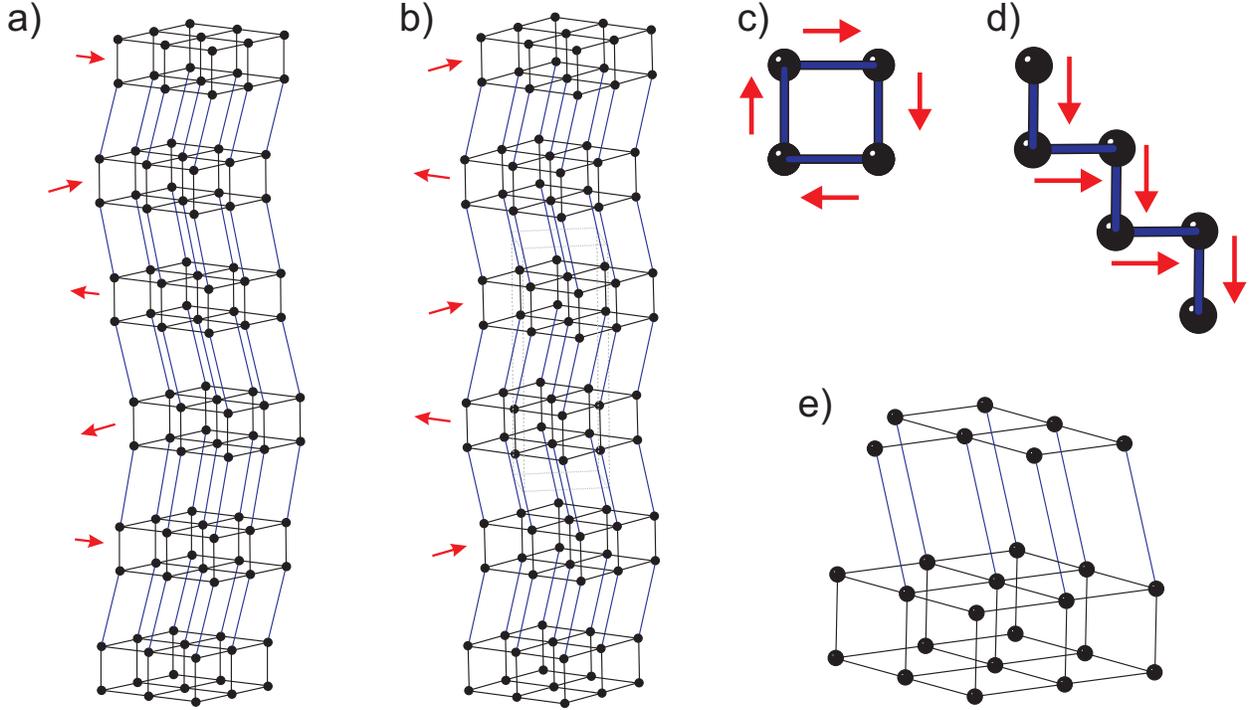}
\end{center}
\caption{Two examples of the Engel sets in 3-space with identical $4R$- and different $6R$-clusters: the set with the $P$4\(_1\)22 space group (a), the set with the $C2/c$ space group (b) (the black lines outline the double layers, the blue lines link the closest points from the adjacent double layers, red arrows indicate the direction of shifts of double layers relative to each other), the top views on the two sets (c and d, respectively), showing the systems of shifts of adjacent rectangular double layers, and $2R$-cluster of a single point in the sets (e; identical for the two sets).}
\label{engel-fig4}
\end{figure} 

The $P4_122$ structure contains four double layers per unit cell with each double layer shifted relative to the adjacent ones by 1.2 \AA, along either $a$ or $b$ axes. The sequence of shifts in the $P4_122$ structure can be described as $...+a_i; +b_i; -a_i; -b_i...$, where $\vert a_i \vert = \vert b_i \vert$ = 1.2 \AA (Figure \ref{engel-fig4}c).

The $C2/c$ structure contains two double layers per unit cell only with the shifts of adjacent layers along the [110] and [-110] directions. The sequence of shifts can be described as $...a_i+b_i; b_i-a_i; ...$, where $\vert a_i+b_i \vert = \vert b_i-a_i \vert$ = 1.2 \AA (Figure \ref{engel-fig4}d).

The two sets have congruent $2R$ (Figure \ref{engel-fig4}e) and $4R$ environments  of their points, but are obviously different. The $2R$ environment of the point defines the double layer to which it belongs plus one square lattice from the adjacent double layer. The $4R$ environment defines the positions of the two adjacent double layers, which is obviously not enough to define the whole structure, that can be fixed only by fixing the position of the next-neighboring layer through the $6R$ environment as shown in Figure \ref{engel-fig5}. 

\begin{figure}
\begin{center}
\includegraphics{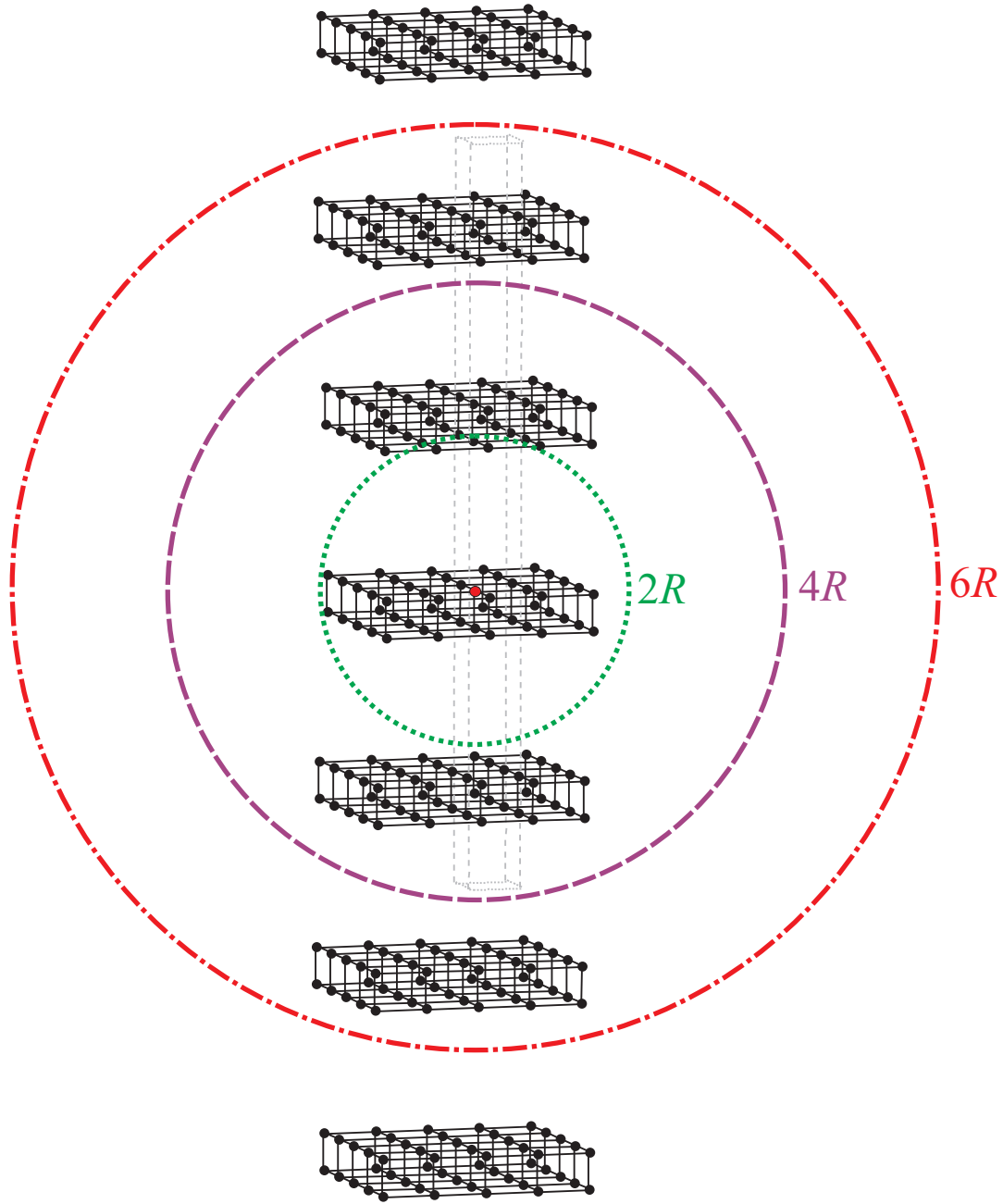}
\end{center}
\caption{Schematic representation of an Engel set with the spheres of the $2R$-, $4R$- and $6R$-radii (showing by green dotted, violet dash and red dot-dash lines, respectively) drawn around a central point (shown in red). See text for details.} 
\label{engel-fig5}
\end{figure}

The two sets shown in Figure \ref{engel-fig4} provide a crystallographic example of the Theorem \ref{enreg}, for which precise and rigorous mathematical proof is given above. From the standpoints of crystal chemistry and crystallography, it is obvious that the Engel sets can be viewed as models of polytypic structures, which are very common in layered materials. According to \cite{guinier}, ``...an element or compound is polytypic if it occurs in several different structural modifications, each of which may be regarded as built up by stacking layers of (nearly) identical structure and composition, and if the modifications differ only in their stacking sequence.'' The two structures shown in Figure \ref{engel-fig4} can be considered as classical examples of polytypes with the $P4_122$ structure being $4T$- and the $C2/c$ structure being $2M$-polytypes. The formation of long-range order arrangements (ideal crystals) in this family of structures would require the fullfillment of the $6R$ regularity radius requirement, whereas violation of this condition would result in the formation of disordered layer stackings. Thus Theorem \ref{enreg} is not only an important fundamental result, but also provides a useful tool for understanding of geometrical conditions for the formation of ordered and disordered polytypic layered materials. Finally, we would like to point out that the Theorem \ref{enreg} provides the exact lower bound of $6R$ in 3-dimensional Euclidean space, whereas $10R$ is the currently proved upper bound. It is very likely that $6R$ is an exact upper bound, however, there is no proof for this statement at the moment. 

We believe that the local theory first proposed in \cite{dedostga} and developed herein will be useful for the understanding mechanisms of crystallization of complex crystalline materials at the micro- and nanoscopic levels and the development of theories of self-assembly in inorganic and molecular systems.



\section*{Acknowledgements}

The work of Nikolay Erokhovets was supported in part by Young Russian Mathematics award. The work of Egon Schulte was partially supported by Simons Foundation award no. 420718. Sergey Krivovichev was supported in this work by the President of Russian Federation grant for leading scientific schools (grant NSh-3079.2018.5).


%
%
%
%

\end{document}